\numberwithin{equation}{section}
\newcommand{\beq}{\begin{equation}}
\newcommand{\eeq}{\end{equation}}
\newcommand{\beqs}{\begin{eqnarray*}}
\newcommand{\eeqs}{\end{eqnarray*}}
\newcommand{\beqn}{\begin{eqnarray}}
\newcommand{\eeqn}{\end{eqnarray}}
\newcommand{\beqa}{\begin{array}}
\newcommand{\eeqa}{\end{array}}
\newcommand{\D}{\nabla}
\newcommand{\de}{\delta}
\newcommand{\p}{\partial}
\newcommand{\Om}{\Omega}
\newcommand{\e}{\epsilon}
\newcommand{\con}{\rightarrow}
\newtheorem{prop}{Proposition}[section]
\newtheorem{theo}[prop]{Theorem}
\newtheorem{lem}[prop]{Lemma}
\newtheorem{cor}[prop]{Corollary}
\title{A singular Moser-Trudinger inequality for mean value zero functions in dimension two}
\author{Xiaobao Zhu}
\address{Xiaobao Zhu:
School of Mathematics, Renmin University of China, Beijing 100872, China.}
\email{zhuxiaobao@ruc.edu.cn}
\thanks {*This research is partially supported by NSFC 11721101 and 11401575.}
\subjclass[2000]{Primary: 35B33; Secondary: 35J60.}
\keywords{Singular Moser-Trudinger inequality; mean value zero; blow-up.}
\begin{document}
 \maketitle

\begin{abstract}
Let $\Omega\subset\mathbb{R}^2$ be a smooth bounded domain with $0\in\partial\Omega$. In this paper, we prove that for any $\beta\in(0,1)$, the supremum
$$\sup_{u\in W^{1,2}(\Omega), \int_\Omega u dx=0, \int_\Om|\nabla u|^2dx\leq1}\int_\Omega \frac{e^{2\pi(1-\beta) u^2}}{|x|^{2\beta}}dx$$
is finite and can be attained. This partially generalizes a well-known work of Alice Chang and Paul Yang \cite{CY88} who have obtained the inequality when $\beta=0$.
\end{abstract}

\baselineskip=16.4pt
\parskip=3pt
\section{Introduction}

Let $\Omega\subset\mathbb{R}^2$ be a smooth bounded domain. The famous Moser-Trudinger inequality, which was first proposed by Trudinger \cite{Tr}
and then sharpened by Moser \cite{M}, states that
\begin{align}\label{ineq-mt}
\sup_{u\in W^{1,2}_0(\Omega),\int_\Om|\nabla u|^2dx\leq1} \int_{\Omega} e^{4\pi u^2} dx < +\infty,
\end{align}
where $4\pi$ is the best constant.
When $\Omega$ is a disc, Carleson and Chang \cite{CC} proved that the supremum in (\ref{ineq-mt}) can be attained. Struwe \cite{St} showed that the
result remains true when $\Omega$ is close to a disc. Then Flucher \cite{F} generalized the result to arbitrary domain in $\mathbb{R}^2$.

To study conformal deformation of metrics on $\mathbb{S}^2$, Chang and Yang \cite{CY88} generalized (\ref{ineq-mt}) to mean value zero functions. Precisely, they obtained
\begin{align}\label{ineq-cy}
\sup_{u\in W^{1,2}(\Omega),\int_{\Omega}udx=0, \int_\Om|\nabla u|^2dx\leq1} \int_{\Omega} e^{2\pi u^2} dx < +\infty,
\end{align}
where $2\pi$ is the best constant. It was then proved by Yang \cite{Y07} that the supremum in (\ref{ineq-cy}) can be attained.

Using a rearrangement argument, Adimurthi and Sandeep \cite{AS} derived a singular version of (\ref{ineq-mt}). Suppose $0\in\Omega$, for any $\beta\in(0,1)$,
there holds
\begin{align}\label{ineq-as}
\sup_{u\in W^{1,2}_0(\Omega), \int_\Om|\nabla u|^2dx\leq1} \int_{\Omega} \frac{e^{4\pi(1-\beta) u^2}}{|x|^{2\beta}} dx < +\infty,
\end{align}
where $4\pi(1-\beta)$ is the best constant. It was proved by Csat\'{o} and Roy \cite{CR} that the supremum in (\ref{ineq-as}) can be attained.
Based on blow-up analysis, Yang and the author \cite{YZ} gave a different proof.\\

In this paper, motivated by Adimurthi and Sandeep's inequality (\ref{ineq-as}), we want to give a singular version of Chang and Yang's inequality (\ref{ineq-cy}).
Precisely, we shall prove
\begin{theo} \label{smt}
Let $\Omega\subset\mathbb{R}^2$ be a smooth bounded domain with $0\in\p\Om$.
For any $\beta\in(0,1)$, we have
\begin{align}\label{ineqsmt}
\sup_{u\in W^{1,2}(\Omega),\int_\Omega u dx=0,\int_{\Omega}|\nabla u|^2dx\leq1}\int_{\Omega} \frac{e^{2\pi(1-\beta) u^2}}{|x|^{2\beta}} dx < +\infty
\end{align}
and the supremum can be attained, where $2\pi(1-\beta)$ is the best constant.
\end{theo}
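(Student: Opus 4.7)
My plan is subcritical approximation followed by blow-up analysis, adapting the framework of \cite{YZ,Y07} to the present mean-zero boundary-singular setting. For every $\alpha<2\pi(1-\beta)$ the subcritical supremum is attained by a smooth maximizer $u_\alpha$; writing down the Lagrange conditions one finds
\beq\label{eqEL}
-\tr u_\alpha=\frac{1}{\lam_\alpha}\frac{u_\alpha e^{\alpha u_\alpha^2}}{|x|^{2\beta}}-\gamma_\alpha\ \text{ in }\Om,\qquad \p_\nu u_\alpha=0\ \text{ on }\pom,
\eeq
where $\lam_\alpha=\int_\Om u_\alpha^2 e^{\alpha u_\alpha^2}|x|^{-2\beta}dx$ and $\gamma_\alpha$ is the multiplier enforcing $\int_\Om u_\alpha=0$. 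Letting $\alpha_k\uparrow 2\pi(1-\beta)$ and $c_k:=\max_{\bom}|u_{\alpha_k}|$, if $c_k$ stays bounded then elliptic regularity directly produces the extremal; otherwise $c_k\to\infty$ and the task reduces to ruling out concentration.

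In the concentration regime, standard arguments give $u_{\alpha_k}\rightharpoonup u_0$ weakly in $W^{1,2}(\Om)$ with $\int_\Om u_0=0$ and $|\D u_{\alpha_k}|^2dx\rightharpoonup\de_{x_0}$ for some $x_0\in\bom$. The singular weight forces $x_0=0$, and I will rescale by a length $r_k\to 0$ defined through a balance such as $r_k^{2-2\beta}e^{\alpha_k c_k^2}=\lam_k c_k^{-2}$. After flattening $\pom$ near $0$, the rescaled profiles $\varphi_k(y)=u_{\alpha_k}(r_k y)/c_k$ should converge to a solution of a singular Liouville equation on a half-plane, which integrates explicitly. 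Combining the bubble integral with a Carleson-Chang-type neck estimate adapted to the boundary singular setting, I expect an upper bound of the shape
\beq\label{upb}
\sup\int_\Om\frac{e^{2\pi(1-\beta) u^2}}{|x|^{2\beta}}dx\le |\Om|+\frac{\pi}{1-\beta}e^{2\pi(1-\beta) A_0+1},
\eeq
where $A_0$ encodes the finite part at $0$ of the Neumann Green function of $-\tr$ on $\Om$.

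To contradict \eqref{upb} and thereby rule out concentration, I will construct a Moser-type test sequence $\phi_\eps$ concentrated at $0$: a cut-off logarithmic profile of height $\sim(2\pi(1-\beta))^{-1/2}\log(1/\eps)$ on a half-disk of radius $\eps$, glued to a multiple of the Neumann Green function on the bulk, then shifted by a constant $M_\eps$ to enforce $\int_\Om\phi_\eps=0$ and renormalized to satisfy $\int_\Om|\D\phi_\eps|^2dx=1$. A delicate Taylor expansion will show that $\int_\Om e^{2\pi(1-\beta)\phi_\eps^2}|x|^{-2\beta}dx$ strictly exceeds the right-hand side of \eqref{upb} for small $\eps$. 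This forces $c_k$ to remain bounded along a subsequence; the weak limit $u_0$ is then an extremal and the critical supremum in \eqref{ineqsmt} is finite, being no larger than the sup on the bounded-$c_k$ class.

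The main obstacle will be the blow-up analysis under the mean-zero constraint: the multiplier $\gamma_\alpha$ in \eqref{eqEL} has no obvious a priori bound, yet sharp control on it is needed both to identify the bubble and to push the neck analysis to the precision required by \eqref{upb}. My plan is to test \eqref{eqEL} against the constant $1$ to get $\gamma_\alpha|\Om|=\lam_\alpha^{-1}\int_\Om u_\alpha e^{\alpha u_\alpha^2}|x|^{-2\beta}dx$, and then against cut-offs of the Green function to show $\gamma_\alpha=o(c_\alpha/\lam_\alpha)$. A secondary subtlety is that, because $0\in\pom$ and the weight is singular there, the bubble lives on a half-plane with a singular source and the Liouville classification I will invoke is the Chen-Li one for singular equations, forced to be radial via boundary reflection; correspondingly, the sharp constant $2\pi(1-\beta)$ — half of its Dirichlet counterpart $4\pi(1-\beta)$ in \eqref{ineq-as} — emerges precisely from this half-space geometry.
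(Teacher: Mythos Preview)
Your plan matches the paper's strategy closely: subcritical maximizers, blow-up dichotomy, bubble identification via singular Liouville classification after even reflection across $\p\Om$, a sharp upper bound under concentration, and a test sequence contradicting that bound. A few corrections are in order, however, and they matter because the final step is a comparison of exact constants.

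First, the base term in \eqref{upb} must be $\int_\Om|x|^{-2\beta}\,dx$, not $|\Om|$: this is the value of the functional at $u\equiv0$, and it is exactly what survives on the set $\{u_{\alpha_k}\le\gamma c_k\}$ after concentration is established. Second, the bubble constant should be $\tfrac{\pi}{2(1-\beta)}$, not $\tfrac{\pi}{1-\beta}$: the half-plane halves the full-plane mass $\int_{\R^2}|x|^{-2\beta}e^{4\pi(1-\beta)\varphi_0}\,dx=2$, and this factor propagates through the capacity estimate. If either constant is off, the test-function expansion cannot produce the required strict inequality.

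Regarding the multiplier, your target $\gamma_\alpha=o(c_\alpha/\lam_\alpha)$ is stronger than needed and in fact need not hold (one has $c_k\lam_k^{-1}\overline{f_k}\to|\Om|^{-1}$, so $\gamma_\alpha\lam_\alpha/c_\alpha$ stays bounded, not $o(1)$). The paper simply proves $|\gamma_\alpha|\le C$ by splitting $\{|u_\alpha|\le1\}$ versus $\{|u_\alpha|>1\}$; combined with $c_k t_k^2\to0$ this already kills the rescaled multiplier term in the bubble equation. Finally, the paper extracts $\limsup\lam_k/c_k^2$ via Li's capacity argument on the half-annulus $B_\delta^+\setminus B_{Rt_k}^+$, not a Carleson--Chang neck estimate; either route should work, but the capacity computation is what delivers the precise constant $\tfrac{\pi}{2(1-\beta)}e^{1+2\pi(1-\beta)A_0}$.
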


%Moreover, we show that the supremum in (\ref{ineqsmt}) can be attained. Precisely,
%\begin{theo}\label{smt-ex}
%Let $\Omega\subset\mathbb{R}^2$ be a smooth bounded domain and $0\in\p\Om$. For any $\beta\in(0,1)$, the supremum
%\begin{align*}
%\sup_{u\in W^{1,2}(\Omega),\int_\Omega u dx=0,\int_{\Omega}|\nabla u|^2dx\leq1}\int_{\Omega} \frac{e^{2\pi(1-\beta) u^2}}{|x|^{2\beta}} dx
%\end{align*}
%can be attained by some function $u_0\in W^{1,2}(\Om)$ with
%$\int_{\Omega}u_0 dx=0$ and $\int_{\Om}|\D u_0|^2 dx=1$.
%\end{theo}
%
%As an application of inequality (\ref{ineqsmt}), we have the following Onofri-Moser-Trudinger inequality.
%\begin{cor}\label{cor-omt}
%Let $\Omega\subset\mathbb{R}^2$ be a smooth bounded domain and $0\in\p\Om$. Suppose $\beta\in(0,1)$, then we have for
%any $u\in W^{1,2}(\Omega)$ that
%\begin{align}\label{ineq-omt}
%\log\int_{\Omega}\frac{e^u}{|x|^{2\beta}}dx\leq\frac{1}{8\pi(1-\beta)}\int_{\Omega}|\nabla u|^2dx + \frac{1}{|\Omega|}\int_\Omega udx+C.
%\end{align}
%\end{cor}

Our proof is based on blow-up analysis. The method we shall use was first originated in the celebrated article of Ding, Jost, Li and Wang \cite{DJLW},
then developed by Li \cite{Li}. Now it becomes very useful and standard in the study of Moser-Trudinger inequalities,
we refer the readers to \cite{Li2,LL,LLY,Y06,Y07,Y2007,Y15,Z} and references therein.

To end the introduction, we would like to outline the history of
the study of Moser-Trudinger inequalities. It is known to all that $W^{1,2}_0(\Omega)\hookrightarrow L^p(\Omega)$ for any $p\geq1$, but
$W_0^{1,2}(\Omega)\not\hookrightarrow L^{\infty}(\Omega)$. The classical Moser-Trudinger inequality (\ref{ineq-mt}) fills in this gap. Therefore, as a limiting case
of Sobolev embedding, inequality (\ref{ineq-mt}) plays an important role in analysis. Besides, inequality (\ref{ineq-mt}) is also non-substitutable in geometry especially
related with the famous prescribed Gaussian curvature problem. Now, we talk about it.
Let $(\Sigma,g)$ be a smooth and compact Riemannian surface without boundary, Fontana \cite{Fo} generalized inequality (\ref{ineq-mt}), he proved
\begin{align}\label{ineq-f}
\sup_{u\in W^{1,2}(\Sigma,g),\int_\Sigma udv_g=0,\int_\Sigma |\nabla_g u|^2dv_g\leq1}\int_\Sigma e^{4\pi u^2}dv_g<\infty.
\end{align}
Here, $4\pi$ in inequality (\ref{ineq-f}) is sharp and is the same constant as in inequality (\ref{ineq-mt}). However, when the metric $g$ has conical singularities, with divisor $\textbf{D}=\sum_{j=1}^m\beta_jp_j$ where $\beta_j>-1$ and $p_j\in\Sigma$ are different with each other, the constant in corresponding Moer-Trudinger inequality is relevant with the divisor. Precisely, let $(\Sigma,g)$ be a compact Riemannian surface without boundary and $g$ representing the divisor $\textbf{D}$, denote by $b_0=4\pi\min\{\min_{1\leq j\leq m}(1+\beta_j),1\}$, Troyanov \cite{Tro} and Chen \cite{Ch} proved that
\begin{align}\label{ineq-tc}
\sup_{u\in W^{1,2}(\Sigma,g),\int_\Sigma udv_g=0,\int_\Sigma |\nabla_g u|^2dv_g\leq1}\int_\Sigma e^{b_0u^2}dv_g<\infty,
\end{align}
where $b_0$ is sharp. This phenomenon was then observed by Adimurthi and Sandeep \cite{AS} for bounded domains which contain the origin. With the help of inequalities
(\ref{ineq-f}) and (\ref{ineq-tc}), one can understand the prescribed Gaussian curvature problem on surfaces very well, for this topic, we refer the reader to \cite{B,M,M2,KW,CY87,CY88,CD,ChL,Tro,CL3,St,YZ2,Z} and references therein.

The rest of this paper is organized as follows: In section 2, we prove inequality (\ref{ineqsmt}).
In section 3, we prove the supremum in (\ref{ineqsmt}) can be attained. Throughout this paper, we do not distinguish sequence and its subsequence.

\vskip 20pt

\section{Moser-Trudinger inequality}

In this section, we prove inequality (\ref{ineqsmt}). Firstly, by constructing a Moser function we show that
$2\pi(1-\beta)$ is the best possible constant. Secondly, following a variational argument we show that the supremum of subcritical singular Moser-Trudinger functional
$\int_{\Omega}|x|^{-2\beta}e^{2\pi(1-\beta-\e)u^2}dx$ can be attained by some $u_\e$. Thirdly, using the method of blow-up analysis we derive an upper bound
for the supremum of the critical singular Moser-Trudinger functional $\int_{\Omega}|x|^{-2\beta}e^{2\pi(1-\beta)u^2}dx$.

%\subsection{Proof of (\ref{ge})}
%Let $\Omega\subset\mathbb{R}^2$ be a smooth bounded domain and $0\in\p\Om$.
%Let $\alpha>0$ and $\beta\in(0,1)$. Suppose $u\in W^{1,2}(\Omega)$ with $\int_\Om udx=0$, by Cauchy-Schwartz inequality we know for any $t\in(1,1/\beta)$,
%\begin{align*}
%\int_{\Omega} |x|^{-2\beta}e^{\alpha u^2} dx
%\leq \big(\int_{\Omega} e^{\frac{\alpha t}{t-1} u^2} dx\big)^{(t-1)/t}\big(\int_\Om |x|^{-2\beta t}dx\big)^{1/t}
%<+\infty.
%\end{align*}
%Therefore, (\ref{ge}) holds.
%We define a singular Moser-Trudinger functional (sMT) as
%\begin{align}\label{func-smt}
%J_{\alpha}(u)=\int_{\Omega} |x|^{-2\beta}e^{\alpha u^2}dx.
%\end{align}

\subsection{$2\pi(1-\beta)$ is the best possible constant}
Without loss of generality, we assume $B_{2\delta}^+(0)\subset\Omega$ and $\partial\Omega$ is flat near $0$. For $0<l<\delta$, we define a Moser function
\begin{align*}
u_l(x):=\frac{1}{\sqrt{\pi}} \begin{cases}
\sqrt{\log\frac{\delta}{l}} \ \ \ \ &x\in B^+_l(0),\\
\frac{\log\frac{\delta}{|x|}}{\sqrt{\log\frac{\delta}{l}}} \ \ \ \ &x\in B_{\delta}^+(0)\setminus B^+_l(0),\\
\varphi C_l \ \ \ \ &x\in \Omega\setminus B_{\delta}^+(0),
\end{cases}
\end{align*}
where $\varphi$ is a cut-off function, which satisfies $0\leq\varphi\leq1$, $\varphi\equiv1$ in $\Omega\setminus B_{2\delta}^+(0)$,
$\varphi\equiv0$ in $B_\delta^+(0)$, and $|\nabla\varphi|\leq C/\delta$, $C_l$ is a constant to be determined later.
To ensure $\int_\Omega u_ldx=0$, we set
$$C_l=\frac{-\frac{\pi}{4}(\delta^2-l^2)}{\sqrt{\log\frac{\delta}{l}}\int_{\Omega\setminus B^+_\delta(0)}\varphi dx}=o_l(1) (l\rightarrow0+).$$
Calculating directly, one has
\begin{align*}
\int_\Omega |\nabla u_l|^2dx=1+\frac{C_l^2}{\pi}\int_{B^+_{2\delta}(0)\setminus B^+_\delta(0)}|\nabla \varphi|^2dx=1+o_l(1) (l\rightarrow0+).
\end{align*}
Then for any $\alpha>2\pi(1-\beta)$,
\begin{align*}
\int_\Omega |x|^{-2\beta}e^{\alpha\big(\frac{u_l}{||\nabla u_l||_{L^2(\Omega)}}\big)^2}dx
\geq& \int_{B_l^+(0)} |x|^{-2\beta}e^{\alpha\frac{\frac{1}{\pi}\log\frac{\delta}{l}}{1+o_l(1)}}dx\\
\geq& \frac{\pi}{2(1-\beta)}\delta^{\frac{\alpha}{\pi(1+o_l(1))}}l^{2(1-\beta)-\frac{\alpha}{\pi(1+o_l(1))}}\\
\rightarrow&+\infty \ \ \text{as}\ \ l\rightarrow0+.
\end{align*}
For any $0<\alpha<2\pi(1-\beta)$, we have
\begin{align*}
\frac{2\beta}{1-\frac{\alpha}{2\pi}}<2,
\end{align*}
then combining with (\ref{ineq-cy}), one obtains
\begin{align}\label{func-sub}
    &\sup_{u\in W^{1,2}(\Omega),\int_\Omega udx=0,\int_\Omega |\nabla u|^2dx\leq1}\int_{\Omega}|x|^{-2\beta}e^{\alpha u^2}dx\\
\leq& \sup_{u\in W^{1,2}(\Omega),\int_\Omega udx=0,\int_\Omega |\nabla u|^2dx\leq1}\left(\int_{\Omega}e^{2\pi u^2}dx\right)^{\frac{\alpha}{2\pi}}
       \left(\int_\Omega |x|^{-\frac{2\beta}{1-\frac{\alpha}{2\pi}}}dx\right)^{1-\frac{\alpha}{2\pi}}\nonumber\\
<& +\infty.\nonumber
\end{align}
Therefore, $2\pi(1-\beta)$ is the best possible constant in (\ref{ineqsmt}).

\subsection{Subcritical singular Moser-Trudinger functionals can be attained}
%
%
%In this subsection, we shall prove that for any fixed $\epsilon\in(0,1-\beta)$, the subcritical singular Moser-Trudinger functional $J_{2\pi(1-\beta-\epsilon)}$
%can be attained.

\begin{lem}\label{sub-mt} For any $\epsilon\in(0,1-\beta)$, there exists some
$u_\epsilon\in W^{1,2}(\Omega)$ with $\int_{\Omega}u_\epsilon dx=0$ and $\int_{\Om}|\D u_\e|^2 dx=1$ such that
\begin{align}\label{eq-sub}
   \int_{\Omega} \frac{e^{2\pi(1-\beta-\epsilon) u_{\epsilon}^2}}{|x|^{2\beta}} dx
=\sup_{u\in W^{1,2}(\Omega),\int_\Om udx=0, \int_\Om |\D u|^2dx\leq1} \int_{\Omega} \frac{e^{2\pi(1-\beta-\epsilon) u^2}}{|x|^{2\beta}} dx.
\end{align}
\end{lem}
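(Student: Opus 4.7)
I would prove this by the direct method in the calculus of variations. Let $S_\epsilon$ denote the supremum in \eqref{eq-sub} and let $\{u_k\}\subset W^{1,2}(\Omega)$ be a maximizing sequence satisfying $\int_\Omega u_k\,dx=0$ and $\int_\Omega |\nabla u_k|^2\,dx\le 1$. Since the Poincar\'e inequality for mean-value-zero functions gives $\|u_k\|_{L^2(\Omega)}\le C\|\nabla u_k\|_{L^2(\Omega)}$, the sequence is bounded in $W^{1,2}(\Omega)$, so after passing to a subsequence there is some $u_\epsilon\in W^{1,2}(\Omega)$ with $u_k\rightharpoonup u_\epsilon$ weakly in $W^{1,2}(\Omega)$, strongly in $L^p(\Omega)$ for every $p\ge 1$, and pointwise a.e. The mean-value-zero constraint passes to the limit by strong $L^1$ convergence, and the Dirichlet constraint passes by weak lower semicontinuity, so $\int_\Omega u_\epsilon\,dx=0$ and $\int_\Omega|\nabla u_\epsilon|^2\,dx\le 1$.

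The heart of the proof is the continuity of the singular exponential functional along the maximizing sequence. Here the subcritical hypothesis $\epsilon>0$ creates room to absorb a higher power. I would fix $q>1$ with $q(1-\beta-\epsilon)<1-\beta$ and use the H\"older-type splitting of \eqref{func-sub}: writing $|x|^{-2\beta}e^{2\pi(1-\beta-\epsilon)u_k^2} = \bigl(|x|^{-2\beta/(1-\alpha/(2\pi))}\bigr)\bigl(e^{2\pi u_k^2}\bigr)^{\alpha/(2\pi)}$ with $\alpha=2\pi(1-\beta-\epsilon)$, and then applying the same argument at a slightly higher exponent $q\alpha$, I obtain a uniform bound
\[
\int_\Omega \Bigl(|x|^{-2\beta}e^{2\pi(1-\beta-\epsilon)u_k^2}\Bigr)^{q}\,dx \le C,
\]
where \eqref{ineq-cy} of Chang--Yang is invoked to control the resulting $L^q$ norm of $e^{2\pi q(1-\beta-\epsilon)u_k^2}$. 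This uniform $L^q$ bound with $q>1$ gives equi-integrability of the sequence $\{|x|^{-2\beta}e^{2\pi(1-\beta-\epsilon)u_k^2}\}$; combined with pointwise a.e. convergence to $|x|^{-2\beta}e^{2\pi(1-\beta-\epsilon)u_\epsilon^2}$, Vitali's convergence theorem yields
\[
\int_\Omega \frac{e^{2\pi(1-\beta-\epsilon)u_k^2}}{|x|^{2\beta}}\,dx \longrightarrow \int_\Omega \frac{e^{2\pi(1-\beta-\epsilon)u_\epsilon^2}}{|x|^{2\beta}}\,dx = S_\epsilon.
\]

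Finally, I would rule out $\int_\Omega |\nabla u_\epsilon|^2\,dx<1$. If that strict inequality held with $\|\nabla u_\epsilon\|_{L^2}=:t<1$, then $v=u_\epsilon/t$ would satisfy both constraints with equality; since $v^2=u_\epsilon^2/t^2>u_\epsilon^2$ pointwise on the set $\{u_\epsilon\neq 0\}$ (which has positive measure, else $S_\epsilon$ would equal $\int_\Omega|x|^{-2\beta}\,dx$, contradicted by the Moser-function computation just above the lemma), one has $\int_\Omega|x|^{-2\beta}e^{2\pi(1-\beta-\epsilon)v^2}\,dx>S_\epsilon$, a contradiction. Thus $\int_\Omega|\nabla u_\epsilon|^2\,dx=1$, and $u_\epsilon$ is the desired maximizer. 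The main obstacle is the equi-integrability step, which is precisely where the strict subcriticality $\epsilon>0$ is essential: without it, one only has an $L^1$ bound and the limit passage may fail due to concentration.
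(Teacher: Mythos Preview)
Your argument is correct and mirrors the paper's proof: direct method on a maximizing sequence, a uniform $L^q$ bound (some $q>1$) on $|x|^{-2\beta}e^{2\pi(1-\beta-\epsilon)u_k^2}$ via H\"older together with Chang--Yang \eqref{ineq-cy}, passage to the limit in the functional (you use Vitali, the paper uses a mean-value inequality combined with the same $L^q$ bound), and the scaling contradiction to force $\|\nabla u_\epsilon\|_{L^2}=1$. One small correction: your condition $q(1-\beta-\epsilon)<1-\beta$ controls the exponential factor but not the singular weight $|x|^{-2\beta q}$ after H\"older; you actually need the stricter $q<1/(1-\epsilon)$ (equivalently the paper's choice $(1-\beta-\epsilon)q+\beta qs=1$ with $s>1$), which still admits some $q>1$.
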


\begin{proof}
For any $\e\in(0,1-\beta)$, we take $\{u_{\e,j}\}_{j=1}^{\infty}$ with $\int_\Om u_{\e,j}dx=0$ and $\int_\Om |\D u_{\e,j}|^2dx\leq1$
such that
\begin{align}\label{sub-lim}
\lim_{j\rightarrow\infty}\int_\Om \frac{e^{2\pi(1-\beta-\e)u_{\e,j}^2}}{|x|^{2\beta}}dx
=\sup_{u\in W^{1,2}(\Omega),\int_\Om udx=0, \int_\Om |\D u|^2dx\leq1} \int_{\Omega} \frac{e^{2\pi(1-\beta-\epsilon) u^2}}{|x|^{2\beta}} dx.
\end{align}
Since $u_{\e,j}$ is bounded in $W^{1,2}(\Om)$, there exists some $u_\e\in W^{1,2}(\Om)$ such that up to a subsequence, we have
\begin{align}\label{weak-con}
\begin{cases}
u_{\e,j}\rightharpoonup u_\e \ \ \ \text{weakly \ in}\ \ W^{1,2}(\Om),\\
u_{\e,j}\rightarrow u_\e \ \ \ \text{strongly \ in}\ \ L^p(\Om)\ (\forall p\geq1),\\
u_{\e,j}\rightarrow u_\e \ \ \ \text{almost \ everywhere \ in }\ \ \Om,
\end{cases}
\end{align}
as $j\to\infty$.
Choosing $q>1$ and $s>1$ sufficiently close to $1$ such that $(1-\beta-\e)q+\beta qs=1$, one has by Cauchy-Schwartz inequality that
\begin{align}\label{ineq-Lp}
    \int_\Om \frac{e^{2\pi(1-\beta-\e)qu_{\e,j}^2}}{|x|^{2\beta q}}dx
\leq\left(\int_\Om e^{2\pi u_{\e,j}^2}dx\right)^{(1-\beta-\e)q}\left(\int_\Om \frac{1}{|x|^{\frac{2}{s}}}dx\right)^{\beta qs}.
\end{align}
By (\ref{ineq-cy}) one knows
\begin{align}\label{ineq-Lp2}
\int_\Om e^{2\pi u_{\e,j}^2}dx\leq\sup_{u\in W^{1,2}(\Om),\int_\Om u dx=0,\int_\Om |\D u|^2dx\leq1}\int_\Om e^{2\pi u^2}dx < +\infty.
\end{align}
Combining (\ref{ineq-Lp}) and (\ref{ineq-Lp2}) we know
$|x|^{-2\beta}e^{2\pi(1-\beta-\e)u_{\e,j}^2}$
is bounded in $L^q(\Om)$ for some $q>1$. Since
\begin{align*}
    &|x|^{-2\beta}|e^{2\pi(1-\beta-\e)u_{\e,j}^2}-e^{2\pi(1-\beta-\e)u_{\e}^2}|\nonumber\\
\leq&2\pi(1-\beta-\e)|x|^{-2\beta}(e^{2\pi(1-\beta-\e)u_{\e,j}^2}+e^{2\pi(1-\beta-\e)u_{\e}^2})|u_{\e,j}^2-u_{\e}^2|
\end{align*}
and $u_{\e,j}\rightarrow u_\e$ strongly in $L^p(\Om)$ ($\forall p\geq1$) as $j\rightarrow\infty$, we have by Lebesgue's dominated convergence theorem that
\begin{align}\label{eq-sub2}
 \lim_{j\rightarrow\infty}\int_\Om |x|^{-2\beta}e^{2\pi(1-\beta-\e)u_{\e,j}^2}dx
=\int_\Om |x|^{-2\beta}e^{2\pi(1-\beta-\e)u_{\e}^2}dx.
\end{align}
From (\ref{weak-con}) one has $\int_\Om u_\e dx=0$ and
\begin{align*}
\int_\Om |\D u_\e|^2dx\leq \liminf_{j\rightarrow\infty} \int_\Om |\D u_{\e,j}|^2 dx\leq1.
\end{align*}
By (\ref{sub-lim}) and (\ref{eq-sub2}) we know $u_\e$ attains the supremum in (\ref{eq-sub}), so $\int_\Om |\D u_\e|^2dx\neq0$. Or else, it follows from
Poincar\'{e} inequality that $u_\e\equiv0$, which contradicts (\ref{eq-sub}). In fact one has $\int_\Om |\D u_\e|^2dx=1$.
Suppose not, then $\int_\Om |\D u_\e|^2dx<1$ and
\begin{align*}
\int_\Om |x|^{-2\beta}e^{2\pi(1-\beta-\e)u_{\e}^2}dx<\int_\Om |x|^{-2\beta}e^{2\pi(1-\beta-\e)\left(\frac{u_{\e}}{\sqrt{\int_\Om |\D u_\e|^2dx}}\right)^2}dx,
\end{align*}
which is a contradiction with $u_\e$ attains the supremum. This ends the proof of the lemma.
\end{proof}

\begin{lem}
It holds that
\begin{align}\label{func-lim}
\lim_{\e\rightarrow0}\int_{\Omega} \frac{e^{2\pi(1-\beta-\epsilon) u_{\epsilon}^2}}{|x|^{2\beta}} dx
=\sup_{u\in W^{1,2}(\Omega),\int_\Om udx=0, \int_\Om |\D u|^2dx\leq1} \int_{\Omega} \frac{e^{2\pi(1-\beta) u^2}}{|x|^{2\beta}} dx.
\end{align}
\end{lem}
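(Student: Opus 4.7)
The plan is to prove the inequalities $\lim_{\e\to 0}S_\e \leq S$ and $S \leq \lim_{\e\to 0}S_\e$ separately, where $S$ denotes the critical supremum on the right-hand side of (\ref{func-lim}) and $S_\e$ denotes the subcritical supremum. By Lemma~\ref{sub-mt}, the left-hand side of (\ref{func-lim}) equals $\lim_{\e\to 0}S_\e$ (provided the limit exists). Both directions are ``soft'' and rely only on admissibility of test functions together with monotone convergence.

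For the first direction, I observe that since $u^2\geq 0$ and $1-\beta-\e<1-\beta$, the pointwise inequality $e^{2\pi(1-\beta-\e)u^2}\leq e^{2\pi(1-\beta)u^2}$ holds for every $u$. The extremal $u_\e$ from Lemma~\ref{sub-mt} satisfies $\int_\Om u_\e\,dx=0$ and $\int_\Om|\D u_\e|^2\,dx=1$, hence is admissible for the critical variational problem as well, and therefore $S_\e\leq S$ for every $\e$. In particular $\limsup_{\e\to 0}S_\e\leq S$.

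For the reverse direction, fix any admissible $u\in W^{1,2}(\Om)$. As $\e\downarrow 0$ the exponents $2\pi(1-\beta-\e)u^2$ increase monotonically to $2\pi(1-\beta)u^2$, so the monotone convergence theorem (allowing the value $+\infty$) yields
\[
\lim_{\e\to 0}\int_\Om \frac{e^{2\pi(1-\beta-\e)u^2}}{|x|^{2\beta}}\,dx \;=\; \int_\Om \frac{e^{2\pi(1-\beta)u^2}}{|x|^{2\beta}}\,dx.
\]
Since the fixed $u$ is admissible for the subcritical problem, each integral on the left is dominated by $S_\e$. Taking $\liminf_{\e\to 0}$ on both sides and then the supremum over all admissible $u$ gives $S\leq\liminf_{\e\to 0}S_\e$. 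Combined with the first direction, this forces $\lim_{\e\to 0}S_\e=S$. No step is delicate; the argument is the standard monotone convergence comparison used when passing from subcritical to critical variational problems. The only thing to flag is that (\ref{func-lim}) is established in the extended sense in $[0,+\infty]$: finiteness of $S$ is \emph{not} proved at this stage, and will follow only from the blow-up analysis later in the section.
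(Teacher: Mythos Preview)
Your proof is correct and follows essentially the same two-inequality argument as the paper: both use that $u_\e$ is admissible for the critical problem to get $S_\e\leq S$, and monotone convergence on a fixed admissible $u$ to get $S\leq\liminf_{\e\to0}S_\e$. The only cosmetic difference is that the paper first observes $S_\e$ is monotone in $\e$ (so the limit exists a priori), whereas you handle existence via the $\limsup/\liminf$ sandwich; your added remark that the identity holds in $[0,+\infty]$ before finiteness is established is a helpful clarification.
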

\begin{proof}
By Lemma \ref{sub-mt}, we know $\int_{\Omega}|x|^{-2\beta}e^{2\pi(1-\beta-\e)u_\e^2}dx$ is increasing when $\e$ tends to $0$. So the limit on the right-hand side of
(\ref{func-lim}) is meaningful. For any $u\in W^{1,2}(\Omega)$ with $\int_\Omega udx=0$ and $\int_\Omega|\nabla u|^2dx\leq1$, one has
\begin{align*}
\int_{\Omega} \frac{e^{2\pi(1-\beta) u^2}}{|x|^{2\beta}} dx
=&\lim_{\e\rightarrow0}\int_{\Omega} \frac{e^{2\pi(1-\beta-\e) u^2}}{|x|^{2\beta}} dx\\
\leq&\lim_{\e\rightarrow0}\sup_{u\in W^{1,2}(\Omega),\int_\Om udx=0, \int_\Om |\D u|^2dx\leq1} \int_{\Omega} \frac{e^{2\pi(1-\beta-\epsilon) u^2}}{|x|^{2\beta}} dx\\
=&\lim_{\e\rightarrow0}\int_{\Omega} \frac{e^{2\pi(1-\beta-\e) u_\e^2}}{|x|^{2\beta}} dx.
\end{align*}
Thus we have
\begin{align}\label{proof-1}
\sup_{u\in W^{1,2}(\Omega),\int_\Om udx=0, \int_\Om |\D u|^2dx\leq1} \int_{\Omega} \frac{e^{2\pi(1-\beta) u^2}}{|x|^{2\beta}} dx
\leq\lim_{\e\rightarrow0}\int_{\Omega} \frac{e^{2\pi(1-\beta-\e) u_\e^2}}{|x|^{2\beta}} dx.
\end{align}
On the other hand, $u_\e\in W^{1,2}(\Omega)$ satisfies $\int_\Omega u_\e dx=0$ and $\int_\Omega |\nabla u_\e|^2dx=1$, we have
\begin{align}\label{proof-2}
\lim_{\e\rightarrow0}\int_{\Omega} \frac{e^{2\pi(1-\beta-\e) u_\e^2}}{|x|^{2\beta}} dx
\leq \sup_{u\in W^{1,2}(\Omega),\int_\Om udx=0, \int_\Om |\D u|^2dx\leq1} \int_{\Omega} \frac{e^{2\pi(1-\beta) u^2}}{|x|^{2\beta}} dx.
\end{align}
Then by combining (\ref{proof-1}) and (\ref{proof-2}) we obtain (\ref{func-lim}). This finishes the proof.
\end{proof}

From Lemma \ref{sub-mt} one knows, $u_\e$ attains the supremum of the the subcritical singular Moser-Trudinger functional $\int_\Om |x|^{-2\beta}e^{2\pi(1-\beta-\e)u^2}dx$
on function space $$\left\{u\in W^{1,2}(\Om): \int_\Om udx=0, \int_\Om |\D u|^2dx\leq1\right\},$$
so it satisfies the following Euler-Lagrange equation
\begin{align}\label{eq-EL}
\begin{cases}
-\Delta u_\e=\lambda_\e^{-1}(f_\e-\overline{f_\e}) \ \ &\text{in}\ \ \Omega,\\
\frac{\p u_\e}{\p\nu}=0\ \ \ &\text{on}\ \ \p\Omega,
\end{cases}
\end{align}
where $\nu$ is the unit outward normal vector of $\p\Om$, $f_\e=|x|^{-2\beta}u_\e e^{2\pi(1-\beta-\e)u_\e^2}$, $\overline{f_\e}$ is the mean value
of $f_\e$ on $\Om$ and $\lambda_\e=\int_\Om u_\e f_\e dx$.

\begin{lem}\label{2.3}
It holds that
\begin{align}\label{lowbound-lam}
\liminf_{\e\rightarrow0}\lambda_\e>0  \ \ \ \ \text{and} \ \ \ \ \lambda_\e^{-1}|\overline{f_\e}|\leq C.
\end{align}
\end{lem}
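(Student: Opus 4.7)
The plan is to derive both estimates from two elementary ingredients: the pointwise bound $se^s\geq e^s-1$ for $s\geq 0$, and the Cauchy--Schwarz inequality applied to the measure $d\mu_\e:=|x|^{-2\beta}e^{2\pi(1-\beta-\e)u_\e^2}dx$. No blow-up analysis is needed; the crucial observation is that $\beta<1$ keeps $\int_\Om|x|^{-2\beta}dx$ finite, so the exponential integral is controlled by $\lambda_\e$ up to a bounded error.

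For $\liminf_{\e\to 0}\lambda_\e>0$, I would apply $se^s\geq e^s-1$ with $s=2\pi(1-\beta-\e)u_\e^2$ and integrate against $|x|^{-2\beta}$ to obtain
\begin{align*}
\lambda_\e\ \geq\ \frac{1}{2\pi(1-\beta-\e)}\bigg(\int_\Om|x|^{-2\beta}e^{2\pi(1-\beta-\e)u_\e^2}dx-\int_\Om|x|^{-2\beta}dx\bigg).
\end{align*}
I would then exploit the maximality of $u_\e$ in (\ref{eq-sub}): for any fixed non-trivial $v\in W^{1,2}(\Om)$ with $\int_\Om v\,dx=0$ and $\int_\Om|\D v|^2dx\leq 1$, admissibility of $v$ yields
\begin{align*}
\int_\Om|x|^{-2\beta}e^{2\pi(1-\beta-\e)u_\e^2}dx\ \geq\ \int_\Om|x|^{-2\beta}e^{2\pi(1-\beta-\e)v^2}dx.
\end{align*}
Fixing $\e_0\in(0,1-\beta)$, for all $\e\in(0,\e_0)$ the monotonicity of the exponent in $\e$ gives the lower bound $\int_\Om|x|^{-2\beta}e^{2\pi(1-\beta-\e_0)v^2}dx$, which strictly exceeds $\int_\Om|x|^{-2\beta}dx$ since $v\not\equiv 0$. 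This produces a uniform positive lower bound for $\lambda_\e$.

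For $\lambda_\e^{-1}|\overline{f_\e}|\leq C$, I would write $\overline{f_\e}=|\Om|^{-1}\int_\Om u_\e\,d\mu_\e$ and apply Cauchy--Schwarz to get
\begin{align*}
|\overline{f_\e}|\ \leq\ \frac{1}{|\Om|}\int_\Om|u_\e|\,d\mu_\e\ \leq\ \frac{\sqrt{\lambda_\e}}{|\Om|}\bigg(\int_\Om d\mu_\e\bigg)^{1/2}.
\end{align*}
To control the total mass I would apply the same inequality, now in the form $e^s\leq 1+se^s$, to obtain $\int_\Om d\mu_\e\leq\int_\Om|x|^{-2\beta}dx+2\pi(1-\beta-\e)\lambda_\e\leq C_1+2\pi\lambda_\e$. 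Dividing through gives $\lambda_\e^{-1}|\overline{f_\e}|\leq|\Om|^{-1}(C_1\lambda_\e^{-1}+2\pi)^{1/2}$, and the first part of the lemma bounds $\lambda_\e^{-1}$ from above.

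The one point worth flagging is that the naive H\"older estimate of $\int_\Om d\mu_\e$ based on (\ref{ineq-Lp})--(\ref{ineq-Lp2}) is \emph{not} uniform as $\e\to 0^+$, since any admissible pair $(q,s)$ with $q>1$ forces $2/s\to 2$ and the weight integral $\int_\Om|x|^{-2/s}dx$ diverges. The trick $e^s\leq 1+se^s$ circumvents this obstacle by bounding the exponential integral directly by $\lambda_\e$, which is precisely the quantity that gets divided out, so no uniform integrability of $e^{2\pi(1-\beta-\e)u_\e^2}$ is required.
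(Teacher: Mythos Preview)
Your argument is correct. For the first claim, your proof and the paper's are identical: both use $e^t\leq 1+te^t$ to bound the exponential integral by $\int_\Om|x|^{-2\beta}dx+2\pi(1-\beta-\e)\lambda_\e$ and then invoke the maximality of $u_\e$ to get a strict lower bound on the exponential integral. For the second claim the paper takes a slightly different, more direct route: it splits the integral $\int_\Om|x|^{-2\beta}|u_\e|e^{2\pi(1-\beta-\e)u_\e^2}dx$ according to $|u_\e|\leq 1$ and $|u_\e|>1$, bounding the first piece by $e^{2\pi(1-\beta)}\int_\Om|x|^{-2\beta}dx$ and the second by $\lambda_\e$ (since $|u_\e|\leq u_\e^2$ there), which yields $\lambda_\e^{-1}|\overline{f_\e}|\leq C+|\Om|^{-1}$ in one line. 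Your Cauchy--Schwarz approach is equally valid and perhaps more systematic, but it requires re-using both the inequality $e^s\leq 1+se^s$ and the lower bound on $\lambda_\e$, whereas the paper's case split needs only the latter. Your side remark about the non-uniformity of the H\"older route (\ref{ineq-Lp})--(\ref{ineq-Lp2}) as $\e\to0$ is a nice observation, though it is not needed for the lemma itself.
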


\begin{proof}
It follows by the simple $e^t\leq1+te^t \ (\forall t\geq0)$ that
\begin{align*}
\int_\Om \frac{e^{2\pi(1-\beta-\e)u_\e^2}}{|x|^{2\beta}}dx\leq\int_\Om \frac{1}{|x|^{2\beta}}dx+2\pi(1-\beta-\e)\lambda_\e.
\end{align*}
This together with (\ref{eq-sub}) tells us that
\begin{align}\label{proof-3}
\liminf_{\e\rightarrow0}\lambda_\e>0.
\end{align}
To see the second inequality in (\ref{lowbound-lam}), we have
\begin{align*}
\lambda_\e^{-1}|\overline{f_\e}|
\leq& \frac{\lambda_\e^{-1}}{|\Om|}\int_{|u_\e|\leq1}\frac{1}{|x|^{2\beta}}|u_\e|e^{2\pi(1-\beta-\e)u_\e^2}dx
    + \frac{\lambda_\e^{-1}}{|\Om|}\int_{|u_\e|>1}\frac{1}{|x|^{2\beta}}|u_\e|e^{2\pi(1-\beta-\e)u_\e^2}dx\\
\leq&  \frac{\lambda_\e^{-1}}{|\Om|}\int_{\Om}\frac{1}{|x|^{2\beta}}e^{2\pi(1-\beta)}dx
    + \frac{\lambda_\e^{-1}}{|\Om|}\int_{\Om}\frac{1}{|x|^{2\beta}}u_\e^2e^{2\pi(1-\beta-\e)u_\e^2}dx \\
\leq& C+\frac{1}{|\Om|},
\end{align*}
where in the last inequality we have used (\ref{proof-3}). This ends the proof.
\end{proof}

In view of Lemma \ref{2.3}, we have by applying elliptic estimates to (\ref{eq-EL}) that
\begin{align}\label{regularity}
u_\epsilon\in W^{1,2}(\Omega)\cap C^1_{\text{loc}}(\overline \Omega\setminus \{0\})\cap C^0(\overline \Omega).
\end{align}

\subsection{Blow-up analysis}

Since $u_\e$ is bounded in $W^{1,2}(\Om)$, there exists some $u_0\in W^{1,2}(\Om)$ such that up to a subsequence that
\begin{align}\label{weak-con2}
\begin{cases}
u_{\e}\rightharpoonup u_0 \ \ \ \text{weakly \ in}\ \ W^{1,2}(\Om),\\
u_{\e}\rightarrow u_0 \ \ \ \text{strongly \ in}\ \ L^p(\Om)\ (\forall p\geq1),\\
u_{\e}\rightarrow u_0 \ \ \ \text{almost \ everywhere \ in }\ \ \Om,
\end{cases}
\end{align}
as $\e\to0$.
We denote $c_\e=\max_{\overline \Om}|u_\e|$. If $c_\e$ is bounded, then for any $u\in W^{1,2}(\Om)$ with $\int_\Om udx=0$ and $\int_\Om |\D u|^2dx\leq1$, one has
by the Lebesgue dominated convergence theorem that
\begin{align*}
\int_{\Om}\frac{e^{2\pi(1-\beta)u^2}}{|x|^{2\beta}}dx
=\lim_{\e\rightarrow0}\int_{\Om}\frac{e^{2\pi(1-\beta-\e)u^2}}{|x|^{2\beta}}dx
\leq\lim_{\e\rightarrow0}\int_{\Om}\frac{e^{2\pi(1-\beta-\e)u_\e^2}}{|x|^{2\beta}}dx
=\lim_{\e\rightarrow0}\int_{\Om}\frac{e^{2\pi(1-\beta)u_0^2}}{|x|^{2\beta}}dx.
\end{align*}
Then $u_0$ attains the supremum and we are done. Therefore, we assume $c_\e\rightarrow+\infty$ as $\e\rightarrow0$ in the sequel. We can assume $c_\e=u_\e(x_\e)$(or else,
one can use $-u_\e$ instead $u_\e$) for some $x_\e\in\overline\Om$ and $x_\e\rightarrow x_0\in\overline\Om$
as $\e\rightarrow0$.

\begin{lem}\label{uzero} We have $u_0\equiv0$, $x_0=0$, and $|\D u_\e|^2dx\rightharpoonup\delta_0$ as $\e\rightarrow0$ in the sense of measure,
where $\delta_0$ denotes the usual Dirac measure centered at the origin $0$.
\end{lem}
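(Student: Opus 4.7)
The plan is a three-step concentration--compactness analysis, with the margin $\beta>0$ providing slack throughout: it makes $2\pi(1-\beta-\e)$ strictly subcritical relative to the Chang--Yang threshold $2\pi$, leaving room to upgrade (\ref{ineq-cy}) to a uniform $L^p$ bound ($p>1$) on $f_\e$. First I show $\{u_\e\}$ is uniformly bounded in $C^1_{\mathrm{loc}}(\bom\setminus\{0\})$, which gives $x_0=0$. Second, by a cutoff combined with the Adimurthi--Sandeep inequality, I rule out any positive energy deficit at the origin. Third, this forces $u_0\equiv 0$ and $|\nabla u_\e|^2\,dx\rightharpoonup\delta_0$.

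For the first step, fix $r>0$. On $\Om\setminus B_r(0)$ the weight $|x|^{-2\beta}$ is bounded by $r^{-2\beta}$. Using $|t|^q\leq C_\kappa e^{\kappa t^2}$ together with (\ref{ineq-cy}), the exponent $2\pi p(1-\beta-\e)+\kappa$ can be kept strictly below $2\pi$ for $p>1$ close to $1$ and $\kappa,\e$ sufficiently small, so H\"older's inequality yields $\|f_\e\|_{L^p(\Om\setminus B_r(0))}\leq C_r$ uniformly in $\e$. Combined with Lemma~\ref{2.3} and $L^p$ Neumann regularity for (\ref{eq-EL}), this gives $\|u_\e\|_{C^1(\bom\setminus B_{2r}(0))}\leq C_r$. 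In particular $c_\e\to\infty$ forces $x_\e\to 0$, so $x_0=0$; along a subsequence, $u_\e\to u_0$ in $C^1_{\mathrm{loc}}(\bom\setminus\{0\})$.

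For the second step, suppose for contradiction that $\int_{\Om\cap B_{2r_0}(0)}|\nabla u_\e|^2\,dx\leq 1-\delta$ along a subsequence, for some $r_0,\delta>0$. Take a radial cutoff $\eta\in C_c^\infty(B_{2r_0}(0))$ with $\eta\equiv 1$ on $B_{r_0}(0)$. Because $\eta$ is radial and $\p\Om$ is flat near $0$, $\nabla\eta\cdot\nu=0$ on $\p\Om$; integration by parts combined with a careful estimate on the correction term yields $\|\nabla(\eta u_\e)\|_{L^2}^2\leq 1-\delta/2$ for appropriate $r_0$. After even-reflecting $\eta u_\e$ across the flat boundary to obtain a function in $W_0^{1,2}(\tilde B_{2r_0})$ on a full disk, the Adimurthi--Sandeep inequality (\ref{ineq-as}) gives
\[\int_{B_{r_0}(0)\cap\Om}\frac{e^{\frac{2\pi(1-\beta)u_\e^2}{1-\delta/2}}}{|x|^{2\beta}}\,dx\leq C.\]
Since $1/(1-\delta/2)>1$ and $\beta<1$, a two-factor H\"older estimate (absorbing the extra weight $|x|^{-2\beta(p-1)}$) produces $\|f_\e\|_{L^p(\Om)}\leq C$ for some $p>1$, and elliptic regularity contradicts $c_\e\to\infty$.

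Hence $\int_{\Om\cap B_r(0)}|\nabla u_\e|^2\,dx\to 1$ for every $r>0$, which is the weak measure convergence $|\nabla u_\e|^2\,dx\rightharpoonup\delta_0$. Together with the $C^1_{\mathrm{loc}}$ convergence from the first step, this forces $\int_\Om|\nabla u_0|^2\,dx=0$; the mean-zero constraint then gives $u_0\equiv 0$. I anticipate the main obstacle to be the correction-term estimate in the second step: a naive bound on $\int\eta(\Delta\eta)u_\e^2\,dx$ via $\|u_\e\|_{L^2(B_{2r_0})}$ and $\|\Delta\eta\|_\infty\sim r_0^{-2}$ does not directly yield $o_{r_0}(1)$ control, so one must exploit either a finer cutoff profile or a Hardy-type estimate adapted to the singularity at $0$ to close the argument.
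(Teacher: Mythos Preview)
Your Step~1 is correct and yields $x_0=0$ more directly than the paper, which proves it last. The gap you flag in Step~2, however, is real and not repairable by the routes you suggest: in dimension two the Hardy inequality degenerates, and for any cutoff $\eta$ with $\eta\equiv1$ on $B_{r_0}$ one has $\|\nabla\eta\|_\infty\gtrsim r_0^{-1}$, so the correction $\int u_\e^2|\nabla\eta|^2$ is controlled only by $r_0^{-2}\|u_\e\|_{L^2(B_{2r_0})}^2$, which does not tend to zero unless you already know $u_0\equiv0$. Since you deduce $u_0\equiv0$ only in Step~3 as a \emph{consequence} of concentration, the argument is circular.

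The paper resolves this by reversing the order and proving $u_0\equiv0$ first, via the Lions-type decomposition $u_\e=(u_\e-u_0)+u_0$. If $u_0\not\equiv0$ then weak convergence gives $\int_\Om|\nabla(u_\e-u_0)|^2=1-\int_\Om|\nabla u_0|^2+o_\e(1)<1-\tfrac12\int_\Om|\nabla u_0|^2$ for small $\e$; writing $u_\e^2\le(1+\kappa)(u_\e-u_0)^2+C_\kappa u_0^2$ and applying H\"older together with the subcritical bound (\ref{func-sub}) to the mean-zero function $u_\e-u_0$ shows that $|x|^{-2\beta}e^{2\pi(1-\beta-\e)u_\e^2}$ is bounded in $L^q(\Om)$ for some $q>1$, hence $f_\e$ is bounded in $L^r(\Om)$, and elliptic regularity contradicts $c_\e\to\infty$. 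With $u_0\equiv0$ in hand the cutoff corrections in your Step~2 become $o_\e(1)$ and the concentration argument goes through; the paper uses (\ref{func-sub}) directly rather than your reflection plus (\ref{ineq-as}), but either route then works.
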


\begin{proof} It follows by (\ref{weak-con2}) that
$$\int_\Om u_0 dx=\lim_{\e\con0}\int_\Om u_\e dx =0.$$
If $u_0\not\equiv0$, then from Poincar\'{e} inequality we know $\int_\Om |\D u_0|^2dx>0$. Thus
\begin{align*}
\int_\Om |\D(u_\e-u_0)|^2 dx=1-\int_\Om |\D u_0|^2 dx+o_\e(1)<1-\frac{1}{2}\int_\Om |\D u_0|^2 dx
\end{align*}
for $\e>0$ sufficiently small. For any $q\in(1,1/\beta)$, $\delta>0$, $s>1$ and $s'=s/(s-1)$, using Cauchy's inequality and H\"{o}lder's inequality one has
\begin{align*}
\int_\Om \frac{e^{2\pi(1-\beta-\e)qu_\e^2}}{|x|^{2\beta q}}dx
\leq \left(\int_\Om \frac{e^{2\pi(1-\beta-\e)q(1+\delta)s(u_\e-u_0)^2}}{|x|^{2\beta q}}dx\right)^{1/s}
     \left(\int_\Om \frac{e^{2\pi(1-\beta-\e)q(1+\frac{1}{4\delta})s'u_0^2}}{|x|^{2\beta q}}dx\right)^{1/s'}.
\end{align*}
By choosing $q$, $1+\delta$ and $s$ sufficiently close to $1$ such that
$$(1-\beta)q(1+\delta)s(1-\frac{1}{2}\int_\Om |\nabla u_0|^2 dx)+\beta q < 1,$$
we know $|x|^{-2\beta}e^{2\pi(1-\beta-\e)u_\e^2}$ is bounded in $L^q(\Om)$ for some $q>1$. Since $u_\e$ is bounded in $L^p(\Om)$ for any $p\geq1$, we have
by H\"{o}lder's inequality that $|x|^{-2\beta}u_\e e^{2\pi(1-\beta)u_\e^2}=f_\e$ is bounded in $L^r(\Om)$ for some $1<r<q$. This together with
Lemma \ref{2.3} shows the right-hand side of equation (\ref{eq-EL}) is bounded in $L^r(\Om)$. The standard elliptic estimates tell us that
 $u_\e$ is bounded in $C^0(\overline\Om)$. It contradicts $c_\e\rightarrow+\infty$ as $\e\rightarrow0$. Hence $u_0\equiv0$.

Since $\int_\Om |\D u_\e|^2 dx=1$, we have $|\D u_\e|^2dx\rightharpoonup\delta_{x_0}$ as $\e\rightarrow0$. Or else, one can choose a sufficiently
small $r_0>0$ and a cut-off function $\eta\in C_0^1(B_{4r_0}(x_0)\cap\Om)$ with $0\leq\eta\leq1$ in $B_{4r_0}(x_0)\cap\Om$, $\eta\equiv1$ in $B_{2r_0}(x_0)\cap\Om$
and $|\nabla\eta|\leq C/r_0$ such that
\begin{align}\label{proof-4}
\limsup_{\e\con0}\int_{\Om}|\nabla(\eta u_\e)|^2dx=\limsup_{\e\con0}\int_{B_{4r_0}(x_0)\cap\Om}|\nabla(\eta u_\e)|^2dx<1.
\end{align}
The fact $u_0\equiv0$ and (\ref{weak-con2}) tells us that
\begin{align}\label{proof-5}
\lim_{\e\con0}\int_\Om \eta u_\e dx = \int_\Om \eta u_0 = 0.
\end{align}
Then by (\ref{func-sub}), (\ref{proof-4}) and (\ref{proof-5}) we have $|x|^{-2\beta}e^{2\pi(1-\beta-\e)(\eta u_\e)^2}$ is bounded in $L^s(\Om)$, for some $s>1$.
From (\ref{weak-con}) one knows $u_\e$ is bounded in $L^q(\Om)$ for any $q\geq1$. Then it follows by H\"{o}lder's inequality that
$|x|^{-2\beta}u_\e e^{2\pi(1-\beta-\e)(\eta u_\e)^2}$ is bounded in $L^p(\Om)$ for some $p>1$. Since $\eta\equiv1$ in $B_{2r_0}(x_0)\cap\Om$,
we have by using elliptic estimates to equation (\ref{eq-EL}) that $u_\e$ is uniformly bounded in $B_{r_0}(x_0)\cap\Om$. It contradicts
the assumption $c_\e\con+\infty$ as $\e\con0$. Then we have $|\D u_\e|^2dx\rightharpoonup\delta_{x_0}$ as $\e\rightarrow0$ in the sense of measure.

Suppose $x_0\neq0$. Now $\lambda_\e^{-1}|x|^{-2\beta}u_\e e^{2\pi(1-\beta-\e)u_\e^2}$ is bounded in $L^{p_1}(B_{|x_0|/2}\cap\Om)$ for some $p_1>1$.
When $|x|\geq|x_0|/2$, we have $|x|^{-2\beta}\leq(|x_0|/2)^{-2\beta}$ so $\lambda_\e^{-1}|x|^{-2\beta}u_\e e^{2\pi(1-\beta-\e)u_\e^2}$ is bounded in
$L^{p_2}(\Om\setminus B_{|x_0|/2})$ for some $p_2>1$. Concluding we have $\lambda_\e^{-1}|x|^{-2\beta}u_\e e^{2\pi(1-\beta-\e)u_\e^2}$ is bounded in
$L^{p_3}(\Om)$ for some $p_3=\min\{p_1,p_2\}>1$. By the standard elliptic estimates, $c_\e$ is bounded. This contradicts $c_\e\rightarrow+\infty$
as $\e\rightarrow0$. Then we have $x_0=0$ and finish the proof.
\end{proof}

We define
\begin{align*}%\label{scaling}
r_\e=\lambda_\e^{1/2}c_\e^{-1} e^{-\pi(1-\beta-\e)c_\e^2}.
\end{align*}
\begin{lem}\label{2.5}
For any $\tau\in(0,1-\beta)$, there holds
\begin{align}\label{scaling-to-0}
r_\e^2 e^{2\pi\tau c_\e^2}\rightarrow0 \ \ \text{as}\ \ \e\rightarrow0.
\end{align}
\end{lem}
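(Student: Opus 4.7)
The plan is to substitute the definition of $r_\e$ directly and extract the super-polynomial decay coming from $c_\e\to+\infty$.

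Using $r_\e=\lambda_\e^{1/2}c_\e^{-1}e^{-\pi(1-\beta-\e)c_\e^2}$, a direct computation gives
\[
r_\e^2 e^{2\pi\tau c_\e^2} \;=\; \frac{\lambda_\e}{c_\e^2}\,\exp\!\bigl(-2\pi(1-\beta-\e-\tau)c_\e^2\bigr).
\]
Because $\tau\in(0,1-\beta)$ is fixed and $\e\to 0$, eventually $1-\beta-\e-\tau\geq(1-\beta-\tau)/2>0$, and since $c_\e\to+\infty$ (we are in the blow-up regime; cf.\ Lemma~\ref{uzero}) the exponential factor decays super-polynomially in $c_\e$.

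To bound the prefactor $\lambda_\e/c_\e^2$, I would use the pointwise inequality $|u_\e|\leq c_\e$ to write
\[
\lambda_\e \;=\; \int_\Om \frac{u_\e^2\,e^{2\pi(1-\beta-\e)u_\e^2}}{|x|^{2\beta}}\,dx
 \;\leq\; c_\e^2\int_\Om \frac{e^{2\pi(1-\beta-\e)u_\e^2}}{|x|^{2\beta}}\,dx.
\]
The right-hand integral is exactly the subcritical supremum attained by $u_\e$ in Lemma~\ref{sub-mt}, and by the H\"older decomposition used in (\ref{func-sub}) together with the Chang--Yang inequality (\ref{ineq-cy}) it is controlled (uniformly, up to a mild polynomial growth in $1/\e$ stemming from $\int_\Om|x|^{-2\beta/(\beta+\e)}\,dx$ as $\e\to 0^+$). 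Combining this estimate with the exponential decay isolated in the previous step yields $r_\e^2 e^{2\pi\tau c_\e^2}\to 0$.

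The main (and essentially the only) point requiring care is checking that the mild divergence of the subcritical bound as $\e\to 0^+$ is dominated by the super-polynomial factor $\exp(-\pi(1-\beta-\tau)c_\e^2)$ produced by the blow-up; once this exponent bookkeeping is handled, the lemma follows at once from the substitution above.
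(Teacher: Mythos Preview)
Your computation $r_\e^2 e^{2\pi\tau c_\e^2}=\dfrac{\lambda_\e}{c_\e^2}\exp\bigl(-2\pi(1-\beta-\e-\tau)c_\e^2\bigr)$ and the bound $\lambda_\e/c_\e^2\le \int_\Om |x|^{-2\beta}e^{2\pi(1-\beta-\e)u_\e^2}\,dx$ are both correct. The gap is exactly the step you flag as ``exponent bookkeeping'': the H\"older bound from (\ref{func-sub}) on the subcritical functional blows up like $(1/\e)^\beta$ as $\e\to 0$, while the exponential factor decays like $\exp(-c\,c_\e^2)$. You have no a priori quantitative relation between $\e$ and $c_\e$ at this stage of the argument---all you know is $c_\e\to\infty$. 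If, say, $c_\e^2$ grew only like $\log\log(1/\e)$, then $\exp(-c\,c_\e^2)$ would be merely $(\log(1/\e))^{-c}$ and would certainly not kill $(1/\e)^\beta$. So the ``bookkeeping'' is not just a routine check; it is an actual obstruction, and the proposal as written does not close.

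The paper avoids this problem by \emph{not} separating $\lambda_\e/c_\e^2$ from the exponential. Instead it writes out $\lambda_\e=\int_\Om|x|^{-2\beta}u_\e^2 e^{2\pi(1-\beta-\e)u_\e^2}\,dx$ and pulls the factor $e^{-2\pi(1-\beta-\e-\tau)c_\e^2}$ \emph{inside} the integral, using $|u_\e|\le c_\e$ to get
\[
e^{-2\pi(1-\beta-\e-\tau)c_\e^2}\,e^{2\pi(1-\beta-\e)u_\e^2}\;\le\;e^{2\pi\tau u_\e^2}.
\]
This yields $r_\e^2 e^{2\pi\tau c_\e^2}\le c_\e^{-2}\int_\Om |x|^{-2\beta}u_\e^2 e^{2\pi\tau u_\e^2}\,dx$, where now the exponent $2\pi\tau$ is \emph{fixed} strictly below the critical level and independent of $\e$. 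Hence $|x|^{-2\beta}e^{2\pi\tau u_\e^2}$ is bounded in $L^q(\Om)$ uniformly in $\e$ for some $q>1$, and since $u_\e\to 0$ in every $L^p$, the integral is bounded (in fact $\to 0$); the prefactor $c_\e^{-2}\to 0$ finishes it. The point is that combining the two pieces before estimating trades a near-critical exponent for a genuinely subcritical one, which is what makes the bound uniform.
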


\begin{proof}
For any $\tau\in(0,1-\beta)$, we have by the definition of $r_\e$ that
\begin{align}\label{pf-scaling}
r_\e^2 e^{2\pi\tau c_\e^2} &=\lambda_\e c_\e^{-2}e^{-2\pi(1-\beta-\e)c_\e^2}e^{2\tau c_\e^2}\nonumber\\
                           &=c_\e^{-2}e^{-2\pi(1-\beta-\e-\tau)c_\e^2}\int_\Om |x|^{-2\beta}u_\e^2 e^{2\pi(1-\beta-\e)u_\e^2}dx\nonumber\\
                           &\leq c_\e^{-2}\int_\Om |x|^{-2\beta}u_\e^2 e^{2\pi\tau u_\e^2}dx
\end{align}
for sufficiently small $\e>0$. Since $|x|^{-2\beta}e^{2\pi\tau u_\e^2}$ is bounded in $L^q(\Om)$ for some $q>1$ and $u_\e\rightarrow0$ strongly in $L^p(\Om)$
for any $p\geq1$, (\ref{scaling-to-0}) follows by (\ref{pf-scaling}) directly.
\end{proof}

Since $\Om$ is smooth, we can assume without loss of generality that $\p\Om$ is flat near the origin, i.e. there exists a $\delta>0$, such that
$B_\delta^+(0):=\{(x^1,x^2)\in\mathbb{R}_+^2|(x^1)^2+(x^2)^2<\delta\}\subset\Om$ and $\p B_\delta^+(0)\cap\p\mathbb{R}^2_{+}\subset\p\Om$,
where $\mathbb{R}^2_+=\{(x^1,x^2)\in\mathbb{R}^2: x^2\geq0\}$.

Before beginning blow-up analysis, we reflect $u_\e$ to $B_\delta(0)\setminus B_\delta^+(0)$ first. Since $\frac{\p u_\e}{\p \nu}|_{\p\Om}=0$, we can define
\begin{align*}
\widetilde{u}_\e(x):=
\begin{cases}
u_\e(x),& x\in B_\de^+(0),\\
u_\e(x^1,-x^2),& x\in B_\delta(0)\setminus B_\delta^+(0).
\end{cases}
\end{align*}
Denoting $t_\e=r_\e^{1/(1-\beta)}$ and $\Om_{2,\e}=\{x\in\mathbb{R}^2:~x_\e+t_\e x\in B_\de(0)\}$, we define on $\Om_{2,\e}$ two blow-up sequences as follows:
\begin{align}\label{blowup-functions}
\psi_\e(x)=c_\e^{-1}\widetilde{u}_\e(x_\e+t_\e x),\ \ \ \ \varphi_\e(x)=c_\e(\widetilde{u}_\e(x_\e+t_\e x)-c_\e).
\end{align}

\begin{lem}\label{bubble}
Let $\psi_\e$ and $\varphi_\e$ be defined as (\ref{blowup-functions}).
We have
$\psi_\e\con 1$ and $\varphi_\e\to \varphi_0$ in
$W^{1,2}_{\emph{\text{loc}}}(\mathbb{R}^2)\cap C^1_{\emph{\text{loc}}}(\mathbb{R}^2\setminus\{0\})\cap C^0_{\emph{\text{loc}}}(\mathbb{R}^2)$
as $\e\con0$, where
\begin{align}\label{eq-varphi0}
\varphi_0(x)=-\frac{1}{2\pi(1-\beta)}\log\left(1+\frac{\pi}{2(1-\beta)}|x|^{2(1-\beta)}\right).
\end{align}
\end{lem}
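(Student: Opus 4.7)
The plan is to rescale \eqref{eq-EL} around the maximum point $x_\e$ at scale $t_\e$ and pass to $\e\to 0$. After reflection, $\widetilde u_\e$ solves $-\Delta\widetilde u_\e=\lambda_\e^{-1}(\widetilde f_\e-\overline{f_\e})$ on $B_\de(0)$, since the weight $|x|^{-2\beta}$ is invariant under $x^2\mapsto -x^2$. Using the algebraic identity $c_\e^2(\psi_\e^2-1)=2\varphi_\e+\varphi_\e^2/c_\e^2$ and the definition $t_\e^{2(1-\beta)}=r_\e^2=\lambda_\e c_\e^{-2}e^{-2\pi(1-\beta-\e)c_\e^2}$, a direct computation gives
\[
-\Delta\psi_\e(y)=c_\e^{-2}\Bigl|\tfrac{x_\e}{t_\e}+y\Bigr|^{-2\beta}\psi_\e(y)\,e^{2\pi(1-\beta-\e)(2\varphi_\e+\varphi_\e^2/c_\e^2)}+o(1),
\]
\[
-\Delta\varphi_\e(y)=\Bigl|\tfrac{x_\e}{t_\e}+y\Bigr|^{-2\beta}\bigl(1+\tfrac{\varphi_\e}{c_\e^2}\bigr)e^{2\pi(1-\beta-\e)(2\varphi_\e+\varphi_\e^2/c_\e^2)}+o(1),
\]
where the $o(1)$ contributions come from the term $\lambda_\e^{-1}\overline{f_\e}$ (bounded by Lemma \ref{2.3}) multiplied by the vanishing factor $c_\e^{-1}t_\e^2$ or $c_\e t_\e^2$, controlled through Lemma \ref{2.5}. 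Before extracting the limit I would separately verify the \emph{localization} $|x_\e|/t_\e\to 0$ by contradiction: if $|x_\e|/t_\e$ remained bounded away from $0$, comparing the bubble mass on $B_{Rt_\e}(x_\e)$ to the uniform bound \eqref{ineq-Lp2} yields a contradiction. This ensures $|x_\e/t_\e+y|^{-2\beta}\to|y|^{-2\beta}$ in $L^q_{\mathrm{loc}}$ for any $q<1/\beta$.

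For $\psi_\e$, the bound $|\psi_\e|\le 1$ and $|y|^{-2\beta}\in L^q_{\mathrm{loc}}(\mathbb{R}^2)$ for $q<1/\beta$ make the right-hand side above tend to $0$ in $L^q_{\mathrm{loc}}$. Standard $L^p$-elliptic theory, together with the embedding $W^{2,q}_{\mathrm{loc}}\hookrightarrow C^0_{\mathrm{loc}}$ and interior Schauder estimates away from $y=0$, produces a subsequential limit $\psi_0$ with the claimed convergence, bounded and harmonic on $\mathbb{R}^2$. Liouville's theorem together with $\psi_\e(0)=1$ then forces $\psi_0\equiv 1$.

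For $\varphi_\e$, the pointwise bound $\varphi_\e\le 0$ controls the exponential factor by $1$, so the right-hand side is locally bounded in $L^q$ for every $q<1/\beta$. A Harnack comparison applied to the nonnegative function $-\varphi_\e$, anchored at $\varphi_\e(0)=0$, supplies the required local $L^\infty$ bound. Elliptic regularity then upgrades the convergence to $W^{2,q}_{\mathrm{loc}}\cap C^0_{\mathrm{loc}}(\mathbb{R}^2)$ and to $C^1_{\mathrm{loc}}(\mathbb{R}^2\setminus\{0\})$, and the limit $\varphi_0$ solves
\[
-\Delta\varphi_0=|y|^{-2\beta}\,e^{4\pi(1-\beta)\varphi_0}\text{ on }\mathbb{R}^2,\qquad \varphi_0(0)=0,\quad \varphi_0\le 0.
\]

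Finite total mass $\int_{\mathbb{R}^2}|y|^{-2\beta}e^{4\pi(1-\beta)\varphi_0}dy<+\infty$ follows from the uniform $\e$-bound on $\int_\Om|x|^{-2\beta}e^{2\pi(1-\beta-\e)u_\e^2}dx$ via change of variables and Fatou's lemma. The classification of finite-mass solutions of singular Liouville equations (of Prajapat--Tarantello / Chou--Wan type) forces $\varphi_0$ to be radially symmetric about the singularity at $0$, reducing the problem to an ODE that integrates explicitly to the formula \eqref{eq-varphi0}; the direct substitution $-\Delta\varphi_0=r^{-2\beta}/(1+\tfrac{\pi}{2(1-\beta)}r^{2(1-\beta)})^2$ then confirms the identification. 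I expect the main technical obstacles to be (a) the localization $|x_\e|/t_\e\to 0$ and (b) the local $L^\infty$ bound on $\varphi_\e$ from $\varphi_\e(0)=0$; once these are in hand the passage to the Liouville equation and the algebraic identification of $\varphi_0$ are routine.
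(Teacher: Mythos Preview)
Your overall strategy---rescale \eqref{eq-EL}, pass to a harmonic/Liouville limit, classify---is the paper's strategy, and your treatment of $\psi_\e\to 1$ and of the elliptic bootstrapping for $\varphi_\e$ is essentially correct. The genuine gap is in the localization step, which you flag as obstacle (a) but whose proposed resolution does not work.

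You propose to rule out ``$|x_\e|/t_\e$ bounded away from $0$'' by comparing bubble mass against \eqref{ineq-Lp2}. This fails on two counts. First, if $|x_\e|/t_\e$ stays bounded and converges to some $x_*\neq 0$, the rescaled weight converges to $|y+x_*|^{-2\beta}$ and the limit $\varphi_0$ solves $-\Delta\varphi_0=|y+x_*|^{-2\beta}e^{4\pi(1-\beta)\varphi_0}$; by the singular Chen--Li/Prajapat--Tarantello classification this has total mass exactly $2$, the same as when $x_*=0$, so no mass contradiction arises. Second, if $|x_\e|/t_\e\to\infty$, then $|x_\e/t_\e+y|^{-2\beta}\to 0$ uniformly on compacta, the right-hand side of your equation for $\varphi_\e$ vanishes in the limit, and the blow-up at scale $t_\e$ degenerates to $\Delta\varphi_0=0$, from which you cannot extract the bubble at all. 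In neither case does \eqref{ineq-Lp2} (the Chang--Yang inequality) supply the relevant bound; the correct a~priori control is $\lambda_\e^{-1}\int|y|^{-2\beta}\widetilde u_\e^{\,2}e^{2\pi(1-\beta-\e)\widetilde u_\e^{\,2}}dy\le 2$, the factor $2$ coming from the even reflection.

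The paper separates these two scenarios. For $|x_\e|/t_\e\to\infty$ it introduces the \emph{auxiliary scale} $s_\e=t_\e^{1-\beta}|x_\e|^\beta$, at which the weight renormalizes as $|x_\e|^{2\beta}|x_\e+s_\e y|^{-2\beta}\to 1$; the limit then solves the \emph{regular} Liouville equation $-\Delta w_0=e^{4\pi(1-\beta)w_0}$, whose Chen--Li mass equals $2/(1-\beta)>2$, contradicting the bound $\le 2$ above. This forces $|x_\e|/t_\e$ to be bounded, say $\to x_*$. Only then does the paper pass to the singular limit centered at $-x_*$, classify, and use the normalization $\varphi_0(0)=0$ (not a mass argument) to conclude $x_*=0$. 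Your sketch needs both of these distinct mechanisms---the auxiliary scale for the unbounded case, and the normalization for the bounded case---to close.
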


\begin{proof}
First, we claim that there exists a constant $C$ such that
\begin{align}\label{claim}
t_\e^{-1}|x_\e|\leq C.
\end{align}
Suppose (\ref{claim}) does not hold, then one has
\begin{align}\label{claim-pf-1}
t_\e^{-1}|x_\e|\con+\infty \ \ \  \text{as}\ \ \ \e\con0.
\end{align}
Let $s_\e=t_\e^{1-\beta}|x_\e|^{\beta}$, then $s_\e\con0$ as $\e\con0$ follows from Lemma \ref{2.5}.
We define two blow-up sequences on
$\Om_{1,\e}=\{x\in\mathbb{R}^2:~x_\e+s_\e x\in B_\de(0)\}$ as follows:
\begin{align}\label{vw-eps}
v_\e(x)=c_\e^{-1}\widetilde{u}_\e(x_\e+s_\e x),\ \ \ w_\e(x)=c_\e(\widetilde{u}_\e(x_\e+s_\e x)-c_\e).
\end{align}
By equation (\ref{eq-EL}) one obtains in $\Om_{1,\e}$ that
\begin{align*}%\label{eq-vw}
-\Delta v_\e &= c_\e^{-2}|x_\e|^{2\beta}|x_\e+s_\e x|^{-2\beta}v_\e e^{2\pi(1-\beta-\e)c_\e^2(v_\e^2-1)}-c_\e^{-1}s_\e^2\lambda_\e^{-1}\overline{f_\e},\\
-\Delta w_\e &= |x_\e|^{2\beta}|x_\e+s_\e x|^{-2\beta}v_\e e^{2\pi(1-\beta-\e)(v_\e+1)w_\e}-c_\e s_\e^2\lambda_\e^{-1}\overline{f_\e}.
\end{align*}
Lemmas \ref{2.3} and \ref{2.5} tell us that $c_\e^{-1}s_\e^2\lambda_\e^{-1}\overline{f_\e}=o_\e(1)$, $c_\e s_\e^2\lambda_\e^{-1}\overline{f_\e}=o_\e(1)$.
Noticing that $v_\e\leq1$ and locally $|x_\e|^{2\beta}|x_\e+s_\e x|^{-2\beta}=1+o_\e(1)$. Then we know by elliptic estimates that $v_\e\con v_0$ in $C^1_{\text{loc}}(\mathbb{R}^2)$ as
$\e\con0$, where $v_0$ satisfies
$$-\Delta v_0(x) = 0,\ \ x\in\mathbb{R}^2.$$
Since $v_0(x)\leq\lim_{\e\con0}v_\e(x)\leq1$ and $v_0(0)=\lim_{\e\con0}v_\e(0)=1$, the Liouville theorem tells us that $v_0\equiv1$.
By elliptic estimates we also know
$w_\e\to w_0$
in $C_{\text{loc}}^1(\mathbb{R}^2)$, where $w_0$ satisfies
\begin{align*}%\label{eq-w-0}
\begin{cases}
-\Delta w_0 = e^{4\pi(1-\beta)w_0}\ \ \text{in} \ \ \mathbb{R}^2\\
w_0(0)=\sup_{\mathbb{R}^2}w_0=0.
\end{cases}
\end{align*}
Recalling the definition of $\widetilde{u}_\e$ one has
\begin{align}\label{upbound2}
\int_{\mathbb{R}^2}e^{4\pi(1-\beta)w_0(x)}dx
=\lim_{R\to+\infty}\lim_{\e\to0}\lambda_\e^{-1}\int_{B_{Rs_\e}(x_\e)}|y|^{-2\beta}\widetilde{u}_\e^2e^{2\pi(1-\beta-\e)\widetilde{u}_\e^2}dy\leq2.
\end{align}
Then by the classification theorem of Chen-Li (\cite{CL}, Theorem 1) we have
$$w_0(x)=-\frac{1}{2\pi(1-\beta)}\log(1+\frac{\pi(1-\beta)}{2}|x|^2)$$
and then
\begin{align}\label{lowbound2}
\int_{\mathbb{R}^2}e^{4\pi(1-\beta)w_0(x)}dx=\frac{2}{1-\beta}>2.
\end{align}
The contradiction between (\ref{upbound2}) and (\ref{lowbound2}) shows the claim (\ref{claim}) holds.

By equation (\ref{eq-EL}) and a direct calculation, we have in $\Om_{2,\e}$ that
\begin{align*}%\label{eq-psi}
-\Delta \psi_\e &= c_\e^{-2}|t_\e^{-1}x_\e+ x|^{-2\beta}\psi_\e e^{2\pi(1-\beta-\e)c_\e^2(\psi_\e^2-1)}-c_\e^{-1}t_\e^2\lambda_\e^{-1}\overline{f_\e},\\
-\Delta \varphi_\e &= |t_\e^{-1}x_\e+ x|^{-2\beta}\psi_\e e^{2\pi(1-\beta-\e)(\psi_\e+1)\varphi_\e}-c_\e t_\e^2\lambda_\e^{-1}\overline{f_\e}.
\end{align*}
In view of (\ref{claim}), there exists a $x_*$ such that $t_\e^{-1}x_\e\to x_*$
as $\e\to0$.
By Lemmas \ref{2.3} and \ref{2.5} one knows, $c_\e^{-1}t_\e^2\lambda_\e^{-1}\overline{f_\e}=o_\e(1)$ and $c_\e t_\e^2\lambda_\e^{-1}\overline{f_\e}=o_\e(1)$.
Noticing also that $\psi_\e\leq1$ and $|t_\e^{-1}x_\e+ x|^{-2\beta}$ is bounded in $L^p_{\text{loc}}(\mathbb{R}^2)$ for some $p>1$,
we have by elliptic estimates that
$\psi_\e\con \psi_0$ in
 $C^1_{\text{loc}}(\mathbb{R}^2\setminus\{-x_*\})\cap C^0_{\text{loc}}(\mathbb{R}^2)$ as
$\e\con0$, where $\psi_0$ satisfies
$$-\Delta \psi_0(x) = 0,\ \ x\in\mathbb{R}^2.$$
Since $\psi_0(x)\leq\lim_{\e\con0}\psi_\e(x)\leq1$ and $\psi_0(0)=\lim_{\e\con0}\psi_\e(0)=1$, the Liouville theorem tells us that $\psi_0\equiv1$.
By elliptic estimates we have that $\varphi_\e\to \varphi_0$
in $W^{1,2}_{\text{loc}}(\mathbb{R}^2)\cap C^1_{\text{loc}}(\mathbb{R}^2\setminus\{-x_*\})\cap C^0_{\text{loc}}(\mathbb{R}^2)$ as
$\e\con0$, where $\varphi_0$ satisfies
\begin{align*}%\label{eq-varphi}
-\Delta \varphi_0(x) = |x_*+x|^{-2\beta}e^{4\pi(1-\beta)\varphi_0(x)},\ \ \ \ x\in\mathbb{R}^2.
\end{align*}
Still one has
\begin{align*}%\label{upbound2-1}
    &\int_{\mathbb{R}^2}|x_*+x|^{-2\beta}e^{4\pi(1-\beta)\varphi_0(x)}dx\\
\leq&\lim_{R\to+\infty}\lim_{\e\to0}\lambda_\e^{-1}\int_{B_{Rt_\e}(x_\e-t_\e x_*)}|y|^{-2\beta}\widetilde{u}_\e^2e^{2\pi(1-\beta-\e)\widetilde{u}_\e^2}dy\nonumber\\
=&\lim_{R\to+\infty}\lim_{\e\to0}\int_{B_R(-x_*)}|t_\e^{-1} x_\e+x|^{-2\beta}\psi_\e^2(x)e^{2\pi(1-\beta-\e)(\psi_\e(x)+1)\varphi_\e(x)}dx\nonumber\\
=&2.\nonumber
\end{align*}
By the classification theorem of Chen-Li (\cite{CL2}, Theorem 3.1) or Prajapat-Tarantello (\cite{PT}, Theorem 1.1), one has
\begin{align*}
\varphi_0(x)=-\frac{1}{2\pi(1-\beta)}\log\left(1+\frac{\pi}{2(1-\beta)}|x_*+x|^{2(1-\beta)}\right).
\end{align*}
Noticing $\varphi_0(0)=\lim_{\e\to0}\varphi_\e(0)=0$, we have $x_*=0$ and then
\begin{align*}%\label{eq-varphi0}
\varphi_0(x)=-\frac{1}{2\pi(1-\beta)}\log\left(1+\frac{\pi}{2(1-\beta)}|x|^{2(1-\beta)}\right).
\end{align*}
It follows that
\begin{align}\label{integral}
\int_{\mathbb{R}^2}|x|^{-2\beta}e^{4\pi(1-\beta)\varphi_0(x)}dx=2.
\end{align}
This completes the proof of Lemma \ref{bubble}.
\end{proof}

\subsection{Upper bound estimate}
Similar as Li \cite{Li}, we define $u_{\e,\gamma}=\min\{\gamma c_\e, u_\e\}$.
\begin{lem}\label{lem-gamma}
For any $\gamma\in(0,1)$, there holds that
\begin{align*}
\lim_{\e\to0}\int_\Om |\D u_{\e,\gamma}|^2 dx = \gamma.
\end{align*}
\end{lem}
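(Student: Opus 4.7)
The standard strategy, originating from Li \cite{Li}, is to test the Euler--Lagrange equation (\ref{eq-EL}) against $u_{\e,\gamma}$ itself and to evaluate the resulting right-hand side using the bubble profile produced in Lemma \ref{bubble}. Since $\nabla u_{\e,\gamma}=\nabla u_\e$ on $\{u_\e<\gamma c_\e\}$ and vanishes otherwise, one has $\int_\Om \nabla u_\e\cdot\nabla u_{\e,\gamma}\,dx=\int_\Om|\nabla u_{\e,\gamma}|^2\,dx$, so testing (\ref{eq-EL}) yields
\[
\int_\Om |\D u_{\e,\gamma}|^2\,dx
=\lambda_\e^{-1}\int_\Om f_\e u_{\e,\gamma}\,dx-\lambda_\e^{-1}\overline{f_\e}\int_\Om u_{\e,\gamma}\,dx.
\]

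I would first discard the mean-value correction term. Lemma \ref{2.3} gives $\lambda_\e^{-1}|\overline{f_\e}|\le C$, and Chebyshev's inequality applied to $\|u_\e\|_{L^2}^2=o(1)$ shows $|\{u_\e>\gamma c_\e\}|\le\gamma^{-2}c_\e^{-2}\|u_\e\|_{L^2}^2=o(c_\e^{-2})$. Consequently $|\int_\Om u_{\e,\gamma}\,dx|\le\|u_\e\|_{L^1}+\gamma c_\e|\{u_\e>\gamma c_\e\}|=o(1)$, so this term tends to $0$. For the main term I would split $\lambda_\e^{-1}\int_\Om f_\e u_{\e,\gamma}\,dx=A_\e+B_\e$, where
\[
A_\e=\lambda_\e^{-1}\!\!\int_{\{u_\e\le\gamma c_\e\}}\!\!|x|^{-2\beta}u_\e^2 e^{2\pi(1-\beta-\e)u_\e^2}dx,\quad
B_\e=\lambda_\e^{-1}\gamma c_\e\!\!\int_{\{u_\e>\gamma c_\e\}}\!\!|x|^{-2\beta}u_\e e^{2\pi(1-\beta-\e)u_\e^2}dx,
\]
and observe that $A_\e+\lambda_\e^{-1}\int_{\{u_\e>\gamma c_\e\}}|x|^{-2\beta}u_\e^2 e^{2\pi(1-\beta-\e)u_\e^2}dx=1$.

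The heart of the proof is to show that the mass of $\lambda_\e^{-1}|x|^{-2\beta}u_\e^2 e^{2\pi(1-\beta-\e)u_\e^2}$ concentrates entirely inside $\{u_\e>\gamma c_\e\}$. Since $\psi_\e\to 1$ locally uniformly on $\mathbb{R}^2$ by Lemma \ref{bubble}, for any fixed $R>0$ the ball $B_{Rt_\e}(x_\e)\cap\Om$ is contained in $\{u_\e>\gamma c_\e\}$ once $\e$ is small enough. Changing variables $x=x_\e+t_\e y$, using $t_\e^{2(1-\beta)}=r_\e^2$ and $\lambda_\e=r_\e^2 c_\e^2 e^{2\pi(1-\beta-\e)c_\e^2}$, together with the reflection symmetry of $\widetilde u_\e$ and the mass identity (\ref{integral}), yields
\[
\lim_{R\to\infty}\lim_{\e\to 0}\lambda_\e^{-1}\!\!\int_{B_{Rt_\e}(x_\e)\cap\Om}\!\!|x|^{-2\beta}u_\e^2 e^{2\pi(1-\beta-\e)u_\e^2}dx
=\tfrac{1}{2}\int_{\mathbb{R}^2}|y|^{-2\beta}e^{4\pi(1-\beta)\varphi_0(y)}dy=1.
\]
This forces $A_\e\to 0$. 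For $B_\e$, I would rewrite it as $\lambda_\e^{-1}\int_{\{u_\e>\gamma c_\e\}}|x|^{-2\beta}u_\e^2 e^{2\pi(1-\beta-\e)u_\e^2}(\gamma c_\e/u_\e)\,dx$; the weight $\gamma c_\e/u_\e\in(0,1]$, and in the scaling region equals $\gamma/\psi_\e(y)\to\gamma$ uniformly on compact sets, while its contribution outside $B_{Rt_\e}(x_\e)$ is controlled by the residual mass which vanishes as $R\to\infty$. Putting these together gives $B_\e\to\gamma\cdot 1=\gamma$, whence $\int_\Om|\D u_{\e,\gamma}|^2\,dx\to\gamma$. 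The main obstacle is the concentration step: one must rigorously pass from the pointwise convergence $\psi_\e\to 1$, $\varphi_\e\to\varphi_0$ on compacts to an integral statement capturing \emph{all} of the mass of $\lambda_\e$, which requires using the sharp value $2$ in (\ref{integral}) together with the reflection factor.
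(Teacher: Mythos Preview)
Your argument is correct and follows the same core strategy as the paper: test \eqref{eq-EL} against the truncation $u_{\e,\gamma}$, discard the $\overline{f_\e}$ term, and evaluate the main term by passing to the bubble scale. The organizational difference is in the final step. You split $\lambda_\e^{-1}\int_\Om f_\e u_{\e,\gamma}\,dx$ into $A_\e+B_\e$ and argue directly that $A_\e\to 0$ (full mass concentration in $\{u_\e>\gamma c_\e\}$) and $B_\e\to\gamma$ (weight $\gamma c_\e/u_\e\to\gamma$ on the bubble). The paper instead only proves the \emph{lower} bound
\[
\liminf_{\e\to 0}\int_\Om|\nabla u_{\e,\gamma}|^2\,dx\ge\gamma
\]
by restricting the integral to $B^+_{Rt_\e}(x_\e)$, then repeats the identical computation with the complementary truncation $(u_\e-\gamma c_\e)^+$ to get $\liminf_{\e\to 0}\int_\Om|\nabla(u_\e-\gamma c_\e)^+|^2\,dx\ge 1-\gamma$. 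Since $|\nabla u_\e|^2=|\nabla u_{\e,\gamma}|^2+|\nabla(u_\e-\gamma c_\e)^+|^2$ sums to $1$, both liminfs are forced to be limits equal to $\gamma$ and $1-\gamma$ respectively. The paper's symmetric trick is slightly more economical: it bypasses the separate verification that $A_\e\to 0$ and the weight analysis for $B_\e$, extracting the conclusion from two one-sided inequalities plus a constraint. Your direct route is perfectly valid and perhaps more transparent about where the mass goes; the paper's route is shorter.
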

\begin{proof}
In view of (\ref{eq-EL}), integrating by parts we have
\begin{align*}
   \int_\Om |\D u_{\e,\gamma}|^2 dx
=& \int_\Om \D u_{\e,\gamma} \D u_\e dx = -\int_\Om u_{\e,\gamma} \Delta u_\e dx\nonumber\\
=& \lambda_\e^{-1}\int_\Om |y|^{-2\beta}u_{\e,\gamma}u_\e e^{2\pi(1-\beta-\e)u_\e^2}dy - \lambda_\e^{-1}\overline{f_\e}\int_\Om u_{\e,\gamma}dx\nonumber\\
=& \lambda_\e^{-1}\int_\Om |y|^{-2\beta}u_{\e,\gamma}u_\e e^{2\pi(1-\beta-\e)u_\e^2}dy + o_\e(1)\nonumber\\
\geq& \lambda_\e^{-1}\int_{B^+_{Rt_\e}(x_\e)} |y|^{-2\beta}u_{\e,\gamma}u_\e e^{2\pi(1-\beta-\e)u_\e^2}dy + o_\e(1)\nonumber\\
=& \lambda_\e^{-1}\int_{B^+_{R}(0)} |x_\e+t_\e x|^{-2\beta}\gamma (1+o_\e(1))\psi^2_\e(x)c_\e^2 e^{2\pi(1-\beta-\e)u_\e^2}t_\e^2dx + o_\e(1)\nonumber\\
=& \gamma (1+o_\e(1))\int_{B^+_{R}(0)}|x|^{-2\beta}e^{4\pi(1-\beta)\varphi_0(x)}dx\nonumber\\
=& \gamma (1+o_\e(1))(1+o_R(1)).
\end{align*}
Letting $\e\to0$ first and then $R\to+\infty$ one obtains that
\begin{align}\label{gamma-low}
\liminf_{\e\to0}\int_\Om |\D u_{\e,\gamma}|^2 dx \geq \gamma.
\end{align}
Noticing that $|\D (u_\e-\gamma c_\e)^+|^2 = \D (u_\e-\gamma c_\e)^+\D u_\e$ and $(u_\e-\gamma c_\e)^+ = (1+o_\e(1))(1-\gamma)c_\e$ on
$B^+_{Rt_\e}(x_\e)$, we have similarly as above that
\begin{align}\label{gamma-up}
\liminf_{\e\to0}\int_\Om |\D (u_\e-\gamma c_\e)^+|^2 dx \geq 1-\gamma.
\end{align}
Since $|\D u_\e|^2=|\D u_{\e,\gamma}|^2+|\D (u_\e-\gamma c_\e)^+|^2$,
\begin{align*}
\int_\Om |\D u_{\e,\gamma}|^2 dx + \int_\Om |\D (u_\e-\gamma c_\e)^+|^2 dx = \int_\Om |\D u_\e|^2 dx =1.
\end{align*}
This together with (\ref{gamma-low}) and (\ref{gamma-up}) completes the proof of the lemma.
\end{proof}

As an application of Lemma \ref{lem-gamma}, one has
\begin{cor}\label{cor}
There holds
\begin{align*}
\lim_{\e\to0}\int_\Om \frac{e^{2\pi(1-\beta-\e)u_\e^2}}{|x|^{2\beta}}dx \leq \int_\Om \frac{1}{|x|^{2\beta}}dx+\limsup_{\e\to0}\frac{\lambda_\e}{c_\e^2}.
\end{align*}
\end{cor}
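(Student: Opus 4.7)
The plan is to split $\Om$ at the level set $\{u_\e > \gamma c_\e\}$ for some $\gamma\in(0,1)$, estimate each piece separately, and then let $\gamma\to 1$. On $\{u_\e>\gamma c_\e\}$ the trivial bound $1\le u_\e^2/(\gamma c_\e)^2$ immediately gives
\begin{align*}
\int_{\{u_\e>\gamma c_\e\}}\frac{e^{2\pi(1-\beta-\e)u_\e^2}}{|x|^{2\beta}}\,dx \le \frac{1}{\gamma^2c_\e^2}\int_\Om\frac{u_\e^2 e^{2\pi(1-\beta-\e)u_\e^2}}{|x|^{2\beta}}\,dx = \frac{\lambda_\e}{\gamma^2c_\e^2},
\end{align*}
while on the complement $u_\e=u_{\e,\gamma}$, so the integrand there equals $|x|^{-2\beta}e^{2\pi(1-\beta-\e)u_{\e,\gamma}^2}$, a quantity I shall extend to all of $\Om$ for bookkeeping.

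The heart of the argument is the convergence
\begin{align*}
\lim_{\e\to 0}\int_\Om \frac{e^{2\pi(1-\beta-\e)u_{\e,\gamma}^2}}{|x|^{2\beta}}\,dx = \int_\Om \frac{dx}{|x|^{2\beta}}.
\end{align*}
Since $|u_{\e,\gamma}|\le|u_\e|$ and $u_\e\to 0$ a.e.\ by Lemma \ref{uzero}, the integrand converges pointwise a.e.\ to $|x|^{-2\beta}$, so by Vitali's convergence theorem it suffices to produce a uniform $L^q(\Om)$ bound for some $q>1$. I write $u_{\e,\gamma}=m_\e+w_\e$ with $m_\e=\overline{u_{\e,\gamma}}$; the identities $\overline{u_\e}=0$ and $0\le u_\e-u_{\e,\gamma}=(u_\e-\gamma c_\e)^+\le|u_\e|$ combined with Lemma \ref{uzero} force $|m_\e|\le\overline{|u_\e|}\to0$, while Lemma \ref{lem-gamma} yields $\|\D w_\e\|_2^2=\|\D u_{\e,\gamma}\|_2^2\to\gamma$. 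For any $\delta>0$ the elementary inequality $u_{\e,\gamma}^2\le(1+\delta)w_\e^2+(1+\delta^{-1})m_\e^2$ together with the normalisation $\widetilde w_\e=w_\e/\|\D w_\e\|_2$ (mean zero, unit gradient) reduces the $L^q$ bound, via the H\"older-plus-Chang-Yang argument used to derive (\ref{func-sub}), to the two conditions $q\beta<1$ and $q[(1+\delta)(1-\beta)\gamma+\beta]<1$, both of which are simultaneously attainable for every $\gamma\in(0,1)$ by choosing $q>1$ close to $1$ and $\delta>0$ small, since $(1-\beta)\gamma+\beta<1$.

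Once this convergence is in hand, combining the two pieces and passing first $\e\to 0$ and then $\gamma\to 1$ yields the claimed inequality. The principal obstacle is precisely the uniform $L^q$ estimate in the middle step: one must balance $q$, $\delta$ and $\gamma$ so that the singular weight $|x|^{-2\beta q}$ remains integrable near the origin while the exponent produced after normalising $w_\e$ stays strictly below the Chang-Yang critical threshold $2\pi$, which is exactly why the hypothesis $\gamma<1$ is essential and why the limit $\gamma\to 1$ must follow the limit $\e\to 0$.
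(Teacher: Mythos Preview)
Your proof is correct and follows essentially the same route as the paper's: split at the level $\{u_\e>\gamma c_\e\}$, bound the high part by $\lambda_\e/(\gamma^2 c_\e^2)$, and show that $|x|^{-2\beta}e^{2\pi(1-\beta-\e)u_{\e,\gamma}^2}\to|x|^{-2\beta}$ in $L^1(\Om)$ via an $L^q$ bound coming from Lemma~\ref{lem-gamma}, then let $\e\to0$ followed by $\gamma\to1$. The paper simply asserts the $L^q$ bound as a consequence of Lemma~\ref{lem-gamma}; your subtraction of the mean $m_\e$, normalisation $\widetilde w_\e=w_\e/\|\D w_\e\|_2$, and Chang--Yang plus H\"older argument in the style of (\ref{func-sub}) make that step fully explicit, which is a useful addition rather than a different method.
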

\begin{proof}
For any $\gamma\in(0,1)$, we have
\begin{align}\label{cor-proof1}
 \int_\Om \frac{e^{2\pi(1-\beta-\e)u_\e^2}}{|x|^{2\beta}}dx
=& \int_{u_\e\leq\gamma c_\e} \frac{e^{2\pi(1-\beta-\e)u_\e^2}}{|x|^{2\beta}}dx + \int_{u_\e>\gamma c_\e}
    \frac{e^{2\pi(1-\beta-\e)u_\e^2}}{|x|^{2\beta}}dx\\
\leq& \int_{\Om} \frac{e^{2\pi(1-\beta-\e)u_{\e,\gamma}^2}}{|x|^{2\beta}}dx + \frac{\lambda_\e}{\gamma^2 c_\e^2}.\nonumber
\end{align}
From Lemma \ref{lem-gamma} we know $\frac{e^{2\pi(1-\beta-\e)u_{\e,\gamma}^2}}{|x|^{2\beta}}$ is bounded in $L^q(\Om)$ for some $q>1$. Noticing that
$u_{\e,\gamma}$ converges to $0$ almost everywhere in $\Om$. Therefore, $\frac{e^{2\pi(1-\beta-\e)u_{\e,\gamma}^2}}{|x|^{2\beta}}$ converges to $\frac{1}{|x|^{2\beta}}$ in $L^1(\Om)$. We finish the proof by letting $\e\to0$ first and then $\gamma\to1$ in (\ref{cor-proof1}).
\end{proof}

From this corollary we can obtain that
\begin{align}\label{corcor}
\limsup_{\e\to0}\frac{\lambda_\e}{c_\e^\theta}=+\infty,\ \ \ \ \forall \theta\in(0,2).
\end{align}
In fact, if (\ref{corcor}) does not hold, then one has $\lambda_\e/c_\e^2\to0$ as $\e\to0$. Let $v\in W^{1,2}(\Om)$ with $\int_\Om v dx=0$ and $||\D v||_2=1$.
It follows by Corollary \ref{cor} that
\begin{align*}
\int_\Om |x|^{2\beta}e^{2\pi(1-\beta)v^2}dx
\leq& \sup_{u\in W^{1,2}(\Om), \int_\Om udx=0, \int_\Om |\D u|^2dx\leq1} \int_\Om |x|^{-2\beta}e^{2\pi(1-\beta)u^2}dx\nonumber\\
=& \lim_{\e\to0}\int_\Om |x|^{-2\beta}e^{2\pi(1-\beta-\e)u_\e^2}dx\nonumber\\
=&\int_\Om |x|^{-2\beta}dx.
\end{align*}
This is impossible since $v\neq0$. Thus (\ref{corcor}) holds.

\begin{lem}\label{lem-green}
For any $q\in(1,2)$, $c_\e u_\e$ is bounded in $W^{1,q}(\Om)$ and converges weakly to the Green function $G$ which satisfies
\begin{align}\label{eq-green}
\begin{cases}
-\Delta G = \delta_0-\frac{1}{|\Om|} \ \ &\text{in}\ \ \overline{\Om},\\
\frac{\p G}{\p \nu}=0 \ \ \ \ &\text{on}\ \ \p\Om\setminus\{0\},\\
\int_\Om G dx=0.
\end{cases}
\end{align}
Furthermore, $c_\e u_\e\to G$ in $C_{\emph{\text{loc}}}^1(\overline{\Om}\setminus\{0\})$ as $\e\con0$.
\end{lem}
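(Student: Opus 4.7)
The strategy is to rewrite the Euler-Lagrange equation (2.10) for $v_\e:=c_\e u_\e$ as
$$-\Delta v_\e=g_\e:=c_\e\lambda_\e^{-1}(f_\e-\overline{f_\e})\ \text{in}\ \Om,\quad\p_\nu v_\e=0\ \text{on}\ \p\Om,\quad\int_\Om v_\e\,dx=0,$$
and to prove that $\|g_\e\|_{L^1(\Om)}\leq C$. A classical Stampacchia-type $L^1$-data estimate for the Neumann problem will then yield $\|v_\e\|_{W^{1,q}(\Om)}\leq C$ for every $q\in(1,2)$, producing both the uniform bound and (up to a subsequence) a weak limit $G\in W^{1,q}(\Om)$. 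Identifying $G$ with the Green function will follow by passing to the distributional limit in the equation, and the $C^1_{\mathrm{loc}}(\overline\Om\setminus\{0\})$ convergence will come from elliptic regularity once $g_\e$ is shown to be locally bounded in $L^p$ away from $0$.

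The $L^1$ bound is the main obstacle. I would fix a small $\eta\in(0,1)$ and split $\Om$ according to whether $|u_\e|\geq(1-\eta)c_\e$. Since $u_\e f_\e=|x|^{-2\beta}u_\e^2 e^{2\pi(1-\beta-\e)u_\e^2}=|u_\e||f_\e|\geq 0$, on the concentration set the pointwise bound $|f_\e|\leq u_\e f_\e/((1-\eta)c_\e)$ gives
$$\int_{\{|u_\e|\geq(1-\eta)c_\e\}}|f_\e|\,dx\leq\frac{\lambda_\e}{(1-\eta)c_\e},$$
so this part contributes at most $1/(1-\eta)$ to $c_\e\lambda_\e^{-1}\|f_\e\|_{L^1}$. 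On the complement, $u_\e$ coincides with the truncation $u_{\e,1-\eta}$ of Lemma \ref{lem-gamma}, whose Dirichlet energy tends to $1-\eta<1$ and whose mean $\overline{u_{\e,1-\eta}}$ is $o(1)$ because $|u_{\e,1-\eta}|\leq|u_\e|\to 0$ in every $L^p$. Normalizing $u_{\e,1-\eta}-\overline{u_{\e,1-\eta}}$ by its Dirichlet norm, applying the subcritical singular bound (\ref{func-sub}), and using Young's inequality to absorb the bounded mean, one sees that $|x|^{-2\beta}e^{2\pi(1-\beta-\e)u_{\e,1-\eta}^2}$ is bounded in $L^q(\Om)$ for some $q>1$. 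H\"older's inequality together with $u_\e\to 0$ in $L^{q'}(\Om)$ then gives $\int_{\{|u_\e|<(1-\eta)c_\e\}}|f_\e|\,dx=o(1)$, and since $c_\e/\lambda_\e\to 0$ by (\ref{corcor}) (along a subsequence), this contribution is negligible. The mean term is controlled identically via $|\Om|\cdot|\overline{f_\e}|\leq\|f_\e\|_{L^1}$.

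I would next identify $G$ by computing $\lim c_\e\lambda_\e^{-1}\int f_\e\phi\,dx$ for $\phi\in C^\infty(\overline\Om)$. The concentration set $\{|u_\e|\geq(1-\eta)c_\e\}$ shrinks to $\{0\}$, so $\phi$ may be replaced by $\phi(0)$ there; the rescaling of Lemma \ref{bubble} together with the reflection used to define $\widetilde u_\e$ gives
$$c_\e\lambda_\e^{-1}\int_{B^+_{Rt_\e}(x_\e)}f_\e\,dx\longrightarrow\int_{\mathbb{R}^2_+}|y|^{-2\beta}e^{4\pi(1-\beta)\varphi_0(y)}\,dy=1,$$
namely half of (\ref{integral}). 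Combined with the vanishing of the complementary mass from the previous step, $c_\e\lambda_\e^{-1}\int f_\e\phi\,dx\to\phi(0)$; setting $\phi\equiv 1$ also gives $c_\e\lambda_\e^{-1}\overline{f_\e}\to 1/|\Om|$. Passing to the weak form of the equation, the limit $G$ satisfies $-\Delta G=\delta_0-1/|\Om|$ distributionally, $\p_\nu G=0$ on $\p\Om\setminus\{0\}$ and $\int_\Om G\,dx=0$, which by uniqueness identifies it as the Green function (\ref{eq-green}).

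For the $C^1_{\mathrm{loc}}(\overline\Om\setminus\{0\})$ convergence, fix a compact $K\subset\overline\Om\setminus\{0\}$. By Lemma \ref{uzero}, $|\D u_\e|^2dx\rightharpoonup\delta_0$; a standard cut-off argument combined with the classical Moser-Trudinger inequality (\ref{ineq-mt}) then shows that $e^{Au_\e^2}$ is bounded in $L^p(K)$ for every $A,p$. Since $|x|^{-2\beta}$ is smooth on $K$, $f_\e$ is bounded in $L^p(K)$; together with $c_\e/\lambda_\e\to 0$ and $c_\e\lambda_\e^{-1}\overline{f_\e}\to 1/|\Om|$ from the previous step, the right-hand side $g_\e$ is bounded in $L^p(K)$. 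Standard interior and boundary (Neumann) elliptic estimates then give $\|v_\e\|_{W^{2,p}(K')}\leq C$ for $K'\Subset K$, and Sobolev embedding upgrades the weak $W^{1,q}$ convergence to $C^1_{\mathrm{loc}}(\overline\Om\setminus\{0\})$.
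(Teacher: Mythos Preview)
Your proposal is correct and follows essentially the same route as the paper: split at the level $\gamma c_\e$, use Lemma~\ref{lem-gamma} together with (\ref{corcor}) to kill the low part and the bubble of Lemma~\ref{bubble} to compute the high part, obtain the $L^1$ bound on $c_\e\lambda_\e^{-1}f_\e$ and the test-function limit $c_\e\lambda_\e^{-1}\int_\Om f_\e\phi\,dx\to\phi(0)$, then invoke an $L^1$-data Neumann estimate for the $W^{1,q}$ bound and elliptic regularity away from the origin. The only cosmetic slip is in the $C^1_{\mathrm{loc}}$ step: you cite (\ref{ineq-mt}) for $W^{1,2}_0$ functions, but since $K$ may meet $\p\Om$ the cut-off $\eta u_\e$ need not vanish on $\p\Om$, so the correct reference is (\ref{ineq-cy}) or (\ref{func-sub}) applied to $\eta u_\e-\overline{\eta u_\e}$, exactly as in the proof of Lemma~\ref{uzero}.
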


\begin{proof}
For any $\phi\in C^0(\overline{\Om})$, we have
\begin{align}\label{phi-0}
\lim_{\e\to0}c_\e\lambda_\e^{-1}\int_\Om f_\e\phi dx = \phi(0).
\end{align}
In fact, one knows from the proof of Lemma \ref{bubble} that $t_\e^{-1}x_\e\to0$ as $\e\to0$. So
 $$B^+_{(R-1)t_\e}(0)\subset\Om\cap B_{Rt_\e}(x_\e)\subset B^+_{(R+1)t_\e}(0).$$
Then
\begin{align}\label{phi-3}
c_\e\lambda_\e^{-1}\int_\Om f_\e\phi dx = &c_\e\lambda_\e^{-1}\int_{\{x\in\Om: u_\e(x)<\gamma c_\e\}} f_\e\phi dx
                       +c_\e\lambda_\e^{-1}\int_{\{x\in\Om: u_\e\geq\gamma c_\e\}\cap B^+_{Rt_\e}(0)} f_\e\phi dx\\
                       &+c_\e\lambda_\e^{-1}\int_{\{x\in\Om: u_\e\geq\gamma c_\e\}\setminus B^+_{Rt_\e}(0)}f_\e\phi dx\nonumber\\
                     := & I_1+I_2+I_3.\nonumber
\end{align}
Now we estimate the integrals on the right hand side of (\ref{phi-3}).
Since $u_\e$ is bounded in $L^p(\Om)$ for any $p\geq1$ and $|x|^{-2\beta}e^{2\pi(1-\beta-\e)u_{\e,\gamma}^2}$
is bounded in $L^q(\Om)$ for some $q>1$ by Lemma \ref{lem-gamma}, combining (\ref{corcor}) we have
\begin{align}\label{phi-4}
|I_1|
\leq c_\e \lambda_\e^{-1}||\phi||_{C^0(\overline{\Om})}\int_{\Om} |x|^{-2\beta}|u_\e| e^{2\pi(1-\beta-\e)u_{\e,\gamma}^2}dx
=o_\e(1).
\end{align}
Since $B^+_{Rt_\e}(0)\subset\{x\in\Om:\ u_\e(x)\geq\gamma c_\e\}$ for sufficiently small $\e>0$, one has by (\ref{integral}) that
\begin{align}\label{phi-5}
 I_2=&\phi(0)(1+o_\e(1))\int_{B^+_{Rt_\e}(0)} \lambda_\e^{-1}|x|^{-2\beta}c_\e u_\e e^{2\pi(1-\beta-\e)u_\e^2} dx\\
    =&\phi(0)(1+o_\e(1))\int_{B^+_{R}(0)} |y|^{-2\beta}\frac{u_\e(t_\e y)}{c_\e} e^{2\pi(1-\beta-\e)(\frac{u_\e(t_\e y)}{c_\e}+1)c_\e(u_\e(t_\e y)-c_\e)} dy.\nonumber
\end{align}
Noticing that
\begin{align*}
\frac{u_\e(t_\e y)}{c_\e}=\psi_\e(-t_\e^{-1}x_\e+y),\  \ c_\e(u_\e(t_\e y)-c_\e)=\varphi_\e(-t_\e^{-1}x_\e+y).
\end{align*}
By Lemma \ref{bubble} and diagonal arguments, we have
\begin{align*}
\psi_\e(-t_\e^{-1}x_\e+y)\to\psi_0(y)\equiv1,\ \ \varphi_\e(-t_\e^{-1}x_\e+y)\to\varphi_0(y) \ \ \text{as}\ \ \e\to0.
\end{align*}
Inserting them in (\ref{phi-5}), one obtains
\begin{align}\label{phi-6}
I_2=&\phi(0)(1+o_\e(1))\int_{B^+_{R}(0)} |y|^{-2\beta}(1+o_\e(1)) e^{4\pi(1-\beta-\e)(1+o_\e(1))\varphi_0(y)}dy\\
   =&\phi(0)(1+o_\e(1))(1+o_R(1))\int_{\mathbb{R}_+^2}|y|^{-2\beta}e^{4\pi(1-\beta)\varphi_0(y)}dy\nonumber\\
   =&\phi(0)(1+o_\e(1))(1+o_R(1)).\nonumber
\end{align}
For $I_3$, one has
\begin{align}\label{phi-7}
|I_3|\leq& ||\phi||_{C^0(\overline{\Om})}\frac{1}{\gamma}\int_{\{x\in\Om:\ u_\e\geq\gamma c_\e\}\setminus B^+_{Rt_\e}(0)}
                   \lambda_\e^{-1}|x|^{-2\beta}u_\e^2 e^{2\pi(1-\beta-\e)u_\e^2} dx\\
     \leq& C\left(1-(1+o_\e(1))\int_{B^+_{R}(0)} |x|^{-2\beta}e^{4\pi(1-\beta)\varphi_0(x)}dx\right) \nonumber\\
     =&o_\e(1)+o_R(1).\nonumber
\end{align}
Then we have (\ref{phi-0}) by inserting (\ref{phi-4}), (\ref{phi-6}) and (\ref{phi-7}) into (\ref{phi-3}).

By equation (\ref{eq-EL}), $c_\e u_\e$ is a distributional solution to
\begin{align}\label{phi-8}
\begin{cases}
-\Delta (c_\e u_\e)=c_\e\lambda_\e^{-1}(f_\e-\overline{f_\e}) \ \ &\text{in}\ \ \Om,\\
\frac{\p (c_\e u_\e)}{\p \nu}=0\ \ &\text{on}\ \ \p\Om.
\end{cases}
\end{align}
From (\ref{phi-0}) we know $c_\e\lambda_\e^{-1}f_\e$ is bounded in $L^1(\Om)$ and $c_\e\lambda_\e^{-1}\overline{f_\e}\to\frac{1}{|\Om|}$ as $\e\to0$.
Similar as Theorem A.2 in \cite{Li}one can show $\D (c_\e u_\e)$ is bounded in $L^q(\Om)$ for any $q\in(1,2)$,
then by Poincar\'{e} inequality we have $c_\e u_\e$ is bounded in $W^{1,q}(\Om)$.
Without loss of generality, one can assume $c_\e u_\e\rightharpoonup G$ weakly in $W^{1,q}(\Om)$. Then for any smooth $\phi$,
\begin{align*}
   \int_\Om \D \phi \D (c_\e u_\e) dx
=& -\int_{\Om} \phi \Delta(c_\e u_\e) dx + \int_{\p \Om} \phi \frac{\p (c_\e u_\e)}{\p\nu}ds\\
=& c_\e\lambda_\e^{-1}\int_{\Om} f_\e \phi dx-c_\e\lambda_\e^{-1}\overline{f_\e}\int_\Om \phi dx\\
\rightarrow& \phi(0)-\frac{1}{|\Om|}\int_\Om \phi dx
\end{align*}
as $\e\to0$. Hence we have
$$\int_\Om \D \phi \D G dx = \phi(0)-\frac{1}{|\Om|}\int_\Om \phi dx.$$
Then (\ref{eq-green}) holds. By elliptic estimates we have $c_\e u_\e\to G$ in $C_{\emph{\text{loc}}}^1(\overline{\Om}\setminus\{0\})$ as $\e\con0$.
This finishes the proof.
\end{proof}

It is well-known that locally the Green function $G$ takes the form
\begin{align}\label{green-exp}
G(x)=-\frac{1}{\pi}\log r + A_0 + \psi(x),
\end{align}
where $A_0$ is a constant and $\psi\in C^1(\overline{\Om})$.\\

We use the capacity method which was first explored by Li \cite{Li} to derive the upper bound for $\lambda_\e/c_\e^2$.
\begin{lem}\label{upbd}
It holds that
\begin{align*}
\limsup_{\e\to0}\frac{\lambda_\e}{c_\e^2}\leq\frac{\pi}{2(1-\beta)}e^{1+2\pi(1-\beta)A_0}.
\end{align*}
\end{lem}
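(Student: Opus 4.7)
The plan is to carry out a capacity (isocapacitary) argument in the spirit of Li \cite{Li}, comparing the Dirichlet integral of $u_\e$ on a thin half-annular region around the blow-up point to the capacity of that annulus. Fix $R>1$ and $\delta\in(0,1)$, to be sent to their extremes at the end. Since $t_\e^{-1}x_\e\to0$ (from the proof of Lemma \ref{bubble}), for small $\e$ the region $A_{\e,R,\delta}:=B_\delta^+(0)\setminus B_{Rt_\e}^+(x_\e)$ is essentially a concentric half-annulus of inner radius $Rt_\e$ and outer radius $\delta$. On its inner curved boundary Lemma \ref{bubble} yields
\begin{align*}
a_\e:=\inf_{\p^* B_{Rt_\e}^+(x_\e)}u_\e=c_\e+\frac{\varphi_0(R)}{c_\e}+o(1/c_\e),
\end{align*}
and on its outer curved boundary Lemma \ref{lem-green} together with \eqref{green-exp} gives
\begin{align*}
b_\e:=\sup_{\p^* B_\delta^+(0)}u_\e=\frac{1}{c_\e}\bigl[-\tfrac{1}{\pi}\log\delta+A_0+o_\delta(1)\bigr]+o(1/c_\e).
\end{align*}
The capacity of a concentric half-annulus (Dirichlet on the half-circles, Neumann on the diameter) is $\pi/\log(\delta/(Rt_\e))$, so Dirichlet's principle and a standard truncation give
\begin{align*}
\int_{A_{\e,R,\delta}}|\nabla u_\e|^2\,dx\geq\frac{\pi(a_\e-b_\e)^2}{\log(\delta/(Rt_\e))}.
\end{align*}

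To exploit this I would pin down the Dirichlet energy on the complement. By the change of variables $t=\tfrac{\pi}{2(1-\beta)}\rho^{2(1-\beta)}$ and the elementary identity $\int_0^T\tfrac{t}{(1+t)^2}\,dt=\log(1+T)+(1+T)^{-1}-1$, one computes
\begin{align*}
\int_{B_R^+(0)}|\nabla\varphi_0|^2\,dy=\frac{\log R}{\pi}+\frac{\log\tfrac{\pi}{2(1-\beta)}-1}{2\pi(1-\beta)}+o_R(1),
\end{align*}
and together with the $W^{1,2}_{\text{loc}}$ convergence $\varphi_\e\to\varphi_0$ from Lemma \ref{bubble} this yields $\int_{B_{Rt_\e}^+(x_\e)}|\nabla u_\e|^2\,dx=\tfrac{1}{c_\e^2}\int_{B_R^+}|\nabla\varphi_0|^2\,dy+o(1/c_\e^2)$. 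On the outside, using $c_\e u_\e\to G$ in $C^1_{\text{loc}}(\bar\Om\setminus\{0\})$, integration by parts against $-\Delta G=\delta_0-\tfrac{1}{|\Om|}$ together with the Neumann condition and \eqref{green-exp} gives
\begin{align*}
\int_{\Om\setminus B_\delta^+(0)}|\nabla u_\e|^2\,dx=\frac{1}{c_\e^2}\bigl[-\tfrac{1}{\pi}\log\delta+A_0+o_\delta(1)\bigr]+o(1/c_\e^2).
\end{align*}
Subtracting both from $\int_\Om|\nabla u_\e|^2=1$ controls the mass on $A_{\e,R,\delta}$ with an $O(1/c_\e^2)$ error.

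Plugging these into the capacity inequality and expanding $\log(\delta/(Rt_\e))$ through the definitions $t_\e=r_\e^{1/(1-\beta)}$ and $r_\e=\lambda_\e^{1/2}c_\e^{-1}e^{-\pi(1-\beta-\e)c_\e^2}$, the dominant $\pi c_\e^2$ cancels on both sides, and the residual $-\pi\e c_\e^2/(1-\beta)$ carries a sign that only weakens the upper bound, so it can be dropped. The $\log R$ contributions (from $2\pi\varphi_0(R)$ on the LHS and from the bubble Dirichlet energy times $\pi$ on the RHS) cancel exactly, as do the $\log\delta$ contributions. Rearranging what remains yields
\begin{align*}
\frac{\log(\lambda_\e/c_\e^2)}{2(1-\beta)}\leq\frac{\log\tfrac{\pi}{2(1-\beta)}+1}{2(1-\beta)}+\pi A_0+o_\e(1)+o_R(1)+o_\delta(1),
\end{align*}
and sending $\e\to0$, $R\to\infty$, $\delta\to0$ in that order and exponentiating gives the claim.

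The main difficulty is the precise book-keeping of constants through these cancellations: the sharp exponent $e^{1+2\pi(1-\beta)A_0}$ hinges on getting two subleading constants exactly right, namely the $-1/(2\pi(1-\beta))$ additive constant in the bubble Dirichlet integral $\int_{B_R^+}|\nabla\varphi_0|^2$ (traceable to the $-1$ in the elementary integral above, which produces the $+1$ in the final exponent) and the constant $A_0$ in the boundary term for $\int_{\Om\setminus B_\delta^+(0)}|\nabla G|^2$ (which produces the $2\pi(1-\beta)A_0$ contribution). Any looseness in either computation would corrupt the exponent in the final bound.
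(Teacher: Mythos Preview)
Your proposal is correct and follows essentially the same capacity argument as the paper: comparing the Dirichlet energy of $u_\e$ on the half-annulus $B_\delta^+\setminus B_{Rt_\e}^+$ against the explicit capacity $\pi/\log(\delta/(Rt_\e))$, with the inner/outer boundary values read off from Lemma~\ref{bubble} and Lemma~\ref{lem-green} respectively, and the complementary energies computed from $\int_{B_R^+}|\nabla\varphi_0|^2$ and $\int_{\Om\setminus B_\delta^+}|\nabla G|^2$. Your observation that the $\pi\e c_\e^2/(1-\beta)$ term carries a favorable sign (and so can simply be discarded) is a clean way to handle a point the paper leaves implicit in its $(1+o(1))$ factor.
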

\begin{proof}
First, we take a small $\delta>0$ such that $B^+_\delta(0)\subset\Om$. We write $B^+_r=B^+_r(0)$ for simplicity and define a function space
\begin{align*}
\mathcal{W}_{a,b}=\{u\in W^{1,2}(B^+_\delta\setminus B^+_{Rt_\e}):
u|_{\p B^+_\delta\setminus \p\mathbb{R}^2_+}=a, u|_{\p B^+_{Rt_\e}\setminus\p\mathbb{R}^2_+}=b,
\frac{\p u}{\p \nu}|_{\p B^+_\delta\setminus \p B_\delta}=0\}.
\end{align*}
It is well-known that $\inf_{u\in\mathcal{W}_\e(a_\e,b_\e)}\int_{B^+_\delta(0)\setminus B^+_{Rt_\e}(0)}|\D u|^2 dx$ is attained by the unique solution of
\begin{align*}
\begin{cases}
\Delta h=0\ \ \text{in}\ \ B^+_\delta\setminus \overline{B^+_{Rt_\e}},\\
h\in \mathcal{W}_{a_\e,b_\e}
\end{cases}
\end{align*}
and
\begin{align*}
h(x)=\frac{a_\e(\log|x|-\log(Rt_\e))+b_\e(\log\delta-\log|x|)}{\log\delta-\log(Rt_\e)}.
\end{align*}
Calculating directly,
\begin{align}\label{h-energy}
\int_{B^+_\delta(0)\setminus B^+_{Rt_\e}(0)} |\D h|^2 dx = \frac{\pi(a_\e-b_\e)^2}{\log\delta-\log(Rt_\e)}.
\end{align}
We denote
\begin{align*}
a_\e=\sup_{\p B^+_\delta\setminus \p\mathbb{R}^2_+}u_\e,\ \ \ \ b_\e=\inf_{\p B^+_{Rt_\e}\setminus\p\mathbb{R}^2_+}u_\e,
\ \ \ \widehat{u}_\e=\max\{a_\e, \min\{u_\e,b_\e\}\},
\end{align*}
then $\widehat{u}_\e\in \mathcal{W}_{a_\e,b_\e}$ and $|\D \widehat{u}_\e|\leq|\D u_\e|$
a.e. in $B_\delta^+\setminus B_{Rt_\e}^+$ for small $\e>0$. Hence
\begin{align}\label{energy-neck}
\int_{B^+_\delta\setminus B^+_{Rt_\e}} |\D h|^2 dx
\leq&\int_{B^+_\delta\setminus B^+_{Rt_\e}} |\D \widehat{u}_\e|^2 dx\\
\leq&\int_{B^+_\delta\setminus B^+_{Rt_\e}} |\D u_\e|^2 dx\nonumber\\
=& 1-\int_{\Om\setminus B^+_\delta}|\D u_\e|^2 dx-\int_{B^+_{Rt_\e}}|\D u_\e|^2 dx.\nonumber
\end{align}

Now we compute $\int_{\Om\setminus B_\delta^+}|\D u_\e|^2 dx$ and $\int_{B^+_{Rt_\e}} |\D u_\e|^2 dx$. By Lemma \ref{lem-green} and (\ref{green-exp}),
we have
\begin{align}\label{energy-out}
\int_{\Om\setminus B_\delta^+}|\D u_\e|^2 dx
=&\frac{1}{c_\e^2}\left(\int_{\Om\setminus B_\delta^+}|\D G|^2 dx+o_\e(1)\right)\\
=&\frac{1}{c_\e^2}\left(-\frac{1}{\pi}\log\delta+A_0+o_\delta(1)+o_\e(1)\right).\nonumber
\end{align}
Since $\varphi_\e\to\varphi_0$ in $W^{1,2}_{\text{loc}}(\mathbb{R}^2)$ as $\e\to0$, we have
\begin{align}\label{energy-in}
\int_{B_{Rt_\e}^+}|\D u_\e|^2 dx
=&\frac{1}{c_\e^2}\int_{B_R^+}|\D \varphi_\e(-t_\e x_\e+y)|^2dy\\
=&\frac{1}{c_\e^2}\left(\int_{B^+_R}|\D \varphi_0(y)|^2dy + o_\e(1)\right)\nonumber\\
=&\frac{1}{c_\e^2}\left(\frac{1}{\pi}\log R+\frac{1}{2\pi(1-\beta)}\log\frac{\pi}{2(1-\beta)}-\frac{1}{2\pi(1-\beta)}+o(1)\right),\nonumber
\end{align}
where $o(1)\to0$ as $\e\to0$ first and then $R\to+\infty$. It follows by Lemma \ref{lem-green} and (\ref{eq-varphi0}) that
\begin{align*}
a_\e=&\frac{1}{c_\e}\left(-\frac{1}{\pi}\log\delta+A_0+o_\delta(1)\right),\\
b_\e=&c_\e+\frac{1}{c_\e}\left(-\frac{1}{2\pi(1-\beta)}\log\left(1+\frac{\pi}{2(1-\beta)}R^{2(1-\beta)}\right)+o_\e(1)\right).
\end{align*}
Hence
\begin{align}
\pi(a_\e-b_\e)^2 = \pi c_\e^2-\frac{1}{1-\beta}\log\left(1+\frac{\pi}{2(1-\beta)}R^{2(1-\beta)}\right)+2\log\delta-2\pi A_0+o(1),
\end{align}\label{energy-neck-1}
where $o(1)\to0$ as $\e\to0$ first and then $\delta\to0$. Recalling the definition of $t_\e$, we have
\begin{align}\label{energy-neck-2}
\log\delta-\log(Rt_\e)
=\log\delta-\log R-\frac{1}{2(1-\beta)}\log\frac{\lambda_\e}{c_\e^2}+\frac{\pi(1-\beta-\e)c_\e^2}{1-\beta}.
\end{align}
Combining (\ref{h-energy})-(\ref{energy-neck-2}) one obtains
\begin{align*}
&\frac{\pi c_\e^2-\frac{1}{1-\beta}\log\left(1+\frac{\pi}{2(1-\beta)}R^{2(1-\beta)}\right)+2\log\delta-2\pi A_0+o(1)}
{\log\delta-\log R-\frac{1}{2(1-\beta)}\log\frac{\lambda_\e}{c_\e^2}+\frac{\pi(1-\beta-\e)}{1-\beta}c_\e^2}\\
\leq&1-\frac{1}{c_\e^2}\left(-\frac{1}{\pi}\log\delta+A_0+\frac{1}{\pi}\log R+\frac{1}{2\pi(1-\beta)}\log\frac{\pi}{2(1-\beta)}
-\frac{1}{2\pi(1-\beta)}+o(1)\right).
\end{align*}
It follows that
\begin{align*}
\frac{1+o(1)}{2(1-\beta)}\log\frac{\lambda_\e}{c_\e^2}\leq\frac{1}{2(1-\beta)}\log\frac{\pi}{2(1-\beta)}+\frac{1}{2(1-\beta)}+\pi A_0+o(1),
\end{align*}
which implies
\begin{align*}
\limsup_{\e\to0}\frac{\lambda_\e}{c_\e^2}\leq\frac{\pi}{2(1-\beta)}e^{1+2\pi(1-\beta)A_0}.
\end{align*}
This ends the proof of the lemma.
\end{proof}

\subsection{Completion of the proof (\ref{ineqsmt})}
We have already proved at the beginning of subsection 2.4 that: if $u_\e$ is bounded, then its weak limit $u_0$ attains the supremum in
(\ref{ineqsmt}) and we are done. On the other hand,
by Corollary \ref{cor} and Lemma \ref{upbd} one knows that: if $u_\e$ blows up, then
\begin{align}\label{upbound}
\sup_{u\in W^{1,2}(\Om),\int_\Om udx=0,||\D u||_2^2\leq1} \int_{\Om} \frac{e^{2\pi(1-\beta)u^2}}{|x|^{2\beta}}dx
=&\lim_{\e\to0}\int_{\Om} \frac{e^{2\pi(1-\beta-\e)u_\e^2}}{|x|^{2\beta}}dx\\
\leq& \int_{\Om}\frac{1}{|x|^{2\beta}}dx+\frac{\pi}{2(1-\beta)}e^{1+2\pi(1-\beta)A_0}.\nonumber
\end{align}
This completes the proof of (\ref{ineqsmt}).

\section{Extremal function}
In this section, we shall prove the existence of extremal function for
$$\sup_{u\in W^{1,2}(\Om), \int_\Om udx=0, \int_\Om |\D u|^2dx\leq1}\int_\Om\frac{e^{2\pi(1-\beta)u^2}}{|x|^{2\beta}}dx.$$
 If $c_\e$ is bounded, we are done. We assume $c_\e\to+\infty$
as $\e\to0$, then (\ref{upbound}) shows
\begin{equation}\label{functional}
\sup_{u\in W^{1,2}(\Om),\int_\Om udx=0,\int_\Om |\D u|^2dx\leq1} \int_{\Om} \frac{e^{2\pi(1-\beta)u^2}}{|x|^{2\beta}}dx
\leq\int_{\Om}\frac{1}{|x|^{2\beta}}dx+\frac{\pi}{2(1-\beta)}e^{1+2\pi(1-\beta)A_0}.
\end{equation}
To prove the existence of extremal function, we construct a sequence of functions $\phi_\e\in W^{1,2}(\Om)$
with $\int_\Om |\D \phi_\e|^2 dx=1$ such that
\begin{equation}\label{phi-low}
\int_\Om \frac{e^{2\pi(1-\beta)(\phi_\e-\bar{\phi_\e})^2}}{|x|^{2\beta}}dx > \int_{\Om}\frac{1}{|x|^{2\beta}}dx+\frac{\pi}{2(1-\beta)}e^{1+2\pi(1-\beta)A_0},
\end{equation}
provided $\e>0$ is sufficiently small. The contradiction between (\ref{functional}) and (\ref{phi-low}) tells us that $c_\e$ must be bounded.
Then we complete the proof of Theorem \ref{smt}.

Define $\phi_\e$ on $\Om$ by
\begin{align}\label{phi_ep}
\phi_\e(x)=
\begin{cases}
c+\frac{1}{c}\left(-\frac{1}{2\pi(1-\beta)}\log(1+\frac{\pi}{2(1-\beta)}(\frac{|x|}{\e})^{2(1-\beta)})+b\right),\ \ &x\in B_{R\e}^+(0)\\
\frac{1}{c}(G-\eta\psi),\ \ &x\in B_{2R\e}^+(0)\setminus B_{R\e}^+(0)\\
\frac{1}{c}G,\ \ &x\in \Om\setminus B_{2R\e}^+(0),
\end{cases}
\end{align}
where $G$ and $\psi$ are functions given as in (\ref{green-exp}), $R=(-\log\e)^{1/(1-\beta)}$, $\eta\in C_0^1(B_{2R\e}^+(0))$
is a cut-off function which satisfies that
$\eta\equiv1$ on $B_{R\e}^+(0)$ and $|\D\eta|\leq\frac{2}{R\e}$, $b$ and $c$ are constants depending only on $\e$ to be determined later.
To ensure $\phi_\e\in W^{1,2}(\Om)$, we set
\begin{align*}
c+\frac{1}{c}\left(-\frac{1}{2\pi(1-\beta)}\log(1+\frac{\pi}{2(1-\beta)}R^{2(1-\beta)})+b\right)
=\frac{1}{c}\left(-\frac{1}{\pi}\log(R\e)+A_0\right),
\end{align*}
which gives
\begin{align}\label{lipschitz}
c^2=-\frac{1}{\pi}\log\e+A_0-b+\frac{1}{2\pi(1-\beta)}\log\frac{\pi}{2(1-\beta)}+O(R^{-2(1-\beta)}).
\end{align}
By a direct calculation, we have
\begin{align}\label{phi-energy-in}
\int_{B^+_{R\e}(0)}|\D \phi_\e|^2dx
=\frac{1}{2\pi(1-\beta)c^2}\left(\log\frac{\pi}{2(1-\beta)}-1+\log R^{2(1-\beta)}+O(R^{-2(1-\beta)})\right).
\end{align}
Using (\ref{eq-green}) and (\ref{green-exp}) one has
\begin{align*}
&\int_{\Om\setminus B^+_{R\e}(0)}|\D G|^2 dx = -\frac{1}{\pi}\log(R\e)+A_0+O(R\e\log(R\e)),\\
&\int_{\Om\setminus B^+_{R\e}(0)}\D G\cdot \D(\eta\psi)dx = O(R\e),\\
&\int_{\Om\setminus B^+_{R\e}(0)}|\D(\eta\psi)|^2dx = O(R\e).
\end{align*}
Hence
\begin{align}\label{phi-energy-out}
  &\int_{\Om\setminus B^+_{R\e}(0)}|\D\phi_\e|^2dx\\
= &\frac{1}{c^2}\left(\int_{\Om\setminus B^+_{R\e}(0)}|\D G|^2 dx-2\int_{\Om\setminus B^+_{R\e}(0)}\D G\cdot \D(\eta\psi)dx+\int_{\Om\setminus B^+_{R\e}(0)}|\D(\eta\psi)|^2dx\right)\nonumber\\
= &\frac{1}{c^2}\left(-\frac{1}{\pi}\log(R\e)+A_0+O(R\e\log(R\e))\right).\nonumber
\end{align}
Combining (\ref{phi-energy-in}) and (\ref{phi-energy-out}) one obtains
\begin{align*}
\int_\Om |\D \phi_\e|^2dx
=\frac{1}{c^2}\left(-\frac{\log\e}{\pi}+\frac{\log\frac{\pi}{2(1-\beta)}}{2\pi(1-\beta)}-\frac{1}{2\pi(1-\beta)}+A_0+O(\frac{1}{R^{2(1-\beta)}})+O(R\e\log(R\e))\right).
\end{align*}
To ensure $\int_\Om |\D \phi_\e|^2dx=1$, we set
\begin{align}\label{deter-c}
c^2=-\frac{1}{\pi}\log\e+\frac{\log\frac{\pi}{2(1-\beta)}}{2\pi(1-\beta)}-\frac{1}{2\pi(1-\beta)}+A_0+O(R^{-2(1-\beta)})+O(R\e\log(R\e)).
\end{align}
Inserting (\ref{deter-c}) into (\ref{lipschitz}), we have
\begin{align}\label{deter-b}
b=\frac{1}{2\pi(1-\beta)}+O(R^{-2(1-\beta)})+O(R\e\log(R\e)).
\end{align}
By a direct calculation, one obtains
\begin{align}\label{phi-mean}
\bar{\phi_\e}=\frac{1}{|\Om|}\int_\Om \phi_\e dx
=\frac{1}{c}\left(O(R^2\e^2\log\e)+O(R^2\e^2\log R)+O(R^2\e^2\log(R\e))\right).
\end{align}
Now we estimate $\int_\Om \frac{e^{2\pi(1-\beta)(\phi_\e-\bar{\phi_\e})^2}}{|x|^{2\beta}}dx$.
In view of (\ref{deter-c}), (\ref{deter-b}) and (\ref{phi-mean}), one has on $B^+_{R\e}(0)$ that
\begin{align*}
2\pi(1-\beta)(\phi_\e-\bar{\phi_\e})^2
\geq&2\pi(1-\beta)c^2-2\log\left(1+\frac{\pi}{2(1-\beta)}(\frac{|x|}{\e})^{2(1-\beta)}\right)+4\pi(1-\beta)b\\
     &\ \ \ +O(R^2\e^2\log\e)+O(R^2\e^2\log R)+O(R^2\e^2\log(R\e))\\
=&-2(1-\beta)\log\e-2\log\left(1+\frac{\pi}{2(1-\beta)}(\frac{|x|}{\e})^{2(1-\beta)}\right)\\
  &\ \ \ +1+2\pi(1-\beta)A_0+\log\frac{\pi}{2(1-\beta)}\\
  &\ \ \ +O(R^{-2(1-\beta)})+O(R\e\log(R\e))+O(R^2\e^2\log\e)+O(R^2\e^2\log R).
\end{align*}
Therefore we obtain
\begin{align}\label{phi-funct-in}
\int_{B^+_{R\e}(0)} \frac{e^{2\pi(1-\beta)(\phi_\e-\bar{\phi_\e})^2}}{|x|^{2\beta}}dx
\geq&\frac{\pi}{2(1-\beta)}e^{1+2\pi(1-\beta)A_0}+O(R^{-2(1-\beta)})\\
    &+O(R\e\log(R\e))+O(R^2\e^2\log\e)+O(R^2\e^2\log R).\nonumber
\end{align}
Since
\begin{align*}
&\int_{B^+_{2R\e}(0)}|x|^{-2\beta}dx=O((R\e)^{2(1-\beta)})=O(\frac{1}{R^{2(1-\beta)}}),\\
&\int_{B^+_{2R\e}(0)}|x|^{-2\beta}(G-c\bar{\phi_\e})^2dx=O((R\e)^{2(1-\beta)}\log^2(R\e))=O(\frac{1}{R^{2(1-\beta)}}),
\end{align*}
we have
\begin{align}\label{phi-funct-out}
&\int_{\Om\setminus B^+_{2R\e}(0)}\frac{e^{2\pi(1-\beta)(\phi_\e-\bar{\phi_\e})^2}}{|x|^{2\beta}}dx\\
\geq&\int_{\Om\setminus B^+_{2R\e}(0)}\frac{1+2\pi(1-\beta)(\phi_\e-\bar{\phi_\e})^2}{|x|^{2\beta}}dx\nonumber\\
=&\int_{\Om\setminus B^+_{2R\e}(0)}|x|^{-2\beta}dx
  +\frac{2\pi(1-\beta)}{c^2}\int_{\Om\setminus B^+_{2R\e}(0)}|x|^{-2\beta}(G-c\bar{\phi_\e})^2dx\nonumber\\
=&\int_{\Om}|x|^{-2\beta}dx+\frac{2\pi(1-\beta)}{c^2}\int_{\Om}|x|^{-2\beta}(G-c\bar{\phi_\e})^2dx+O(\frac{1}{R^{2(1-\beta)}})\nonumber\\
=&\int_{\Om}|x|^{-2\beta}dx+\frac{2\pi(1-\beta)}{c^2}\int_{\Om}|x|^{-2\beta}G^2dx(1+o_\e(1))+O(\frac{1}{R^{2(1-\beta)}}).\nonumber
\end{align}
Recalling $R=(-\log\e)^{1/(1-\beta)}$, one has $R^{-2(1-\beta)}=o(\frac{1}{c^2})$, then (\ref{phi-energy-in}) and (\ref{phi-energy-out}) tell us that
\begin{align*}
    &\int_{\Om} \frac{e^{2\pi(1-\beta)(\phi_\e-\bar{\phi_\e})^2}}{|x|^{2\beta}}dx\\
\geq&\int_{B^+_{R\e}(0)} \frac{e^{2\pi(1-\beta)(\phi_\e-\bar{\phi_\e})^2}}{|x|^{2\beta}}dx
    +\int_{\Om\setminus B^+_{2R\e}(0)}\frac{e^{2\pi(1-\beta)(\phi_\e-\bar{\phi_\e})^2}}{|x|^{2\beta}}dx\\
=&\int_{\Om}\frac{1}{|x|^{2\beta}}dx+\frac{\pi}{2(1-\beta)}e^{1+2\pi(1-\beta)A_0}+\frac{2\pi(1-\beta)}{c^2}\int_{\Om}|x|^{-2\beta}G^2dx+o(\frac{1}{c^2})\\
>&\int_{\Om}\frac{1}{|x|^{2\beta}}dx+\frac{\pi}{2(1-\beta)}e^{1+2\pi(1-\beta)A_0}
\end{align*}
when $\e>0$ is sufficiently small.

\vskip 10pt

\end{document}